\documentclass[12pt]{article}
\usepackage[margin=2in,includefoot,footskip=12pt,]{geometry}
\usepackage{amsfonts,amsmath, amsthm, amssymb,hyperref,fullpage}
\usepackage{verbatim}
\usepackage{graphicx}
\usepackage{listings}
\usepackage{xcolor,enumitem}
\usepackage{tikz}
\usepackage{array}
\usepackage{tabularx}
\usepackage{float}
\usetikzlibrary{arrows}
\usetikzlibrary{graphs,graphs.standard}
\usetikzlibrary{shapes.geometric}
\usetikzlibrary{svg.path}
\usepackage{bm}
\usetikzlibrary{patterns}

\newtheorem{theorem}{Theorem}[section]
\newtheorem{lemma}{Lemma}[section]

\newtheorem{remark}{Remark}[section]

\newtheorem{conj}{Conjecture}[section]
\newtheorem{obs} {Observation} [section]

\newcommand{\affl}[3]{%
	\noindent #1,
	\textsc{#3}\\
	Email: \texttt{#2}\\[1.5pt]
}

\title{Maximizing the algebraic connectivity of graphs of given order and size: a proof of a conjecture of Kolokolnikov}

\author{Sebastian M.~Cioab\u{a}, Abhay Jayarajan, \\ M. Rajesh Kannan, Rahul Roy}

\begin{document}
	
	\maketitle
	\begin{abstract}
		The algebraic connectivity of a graph $G$ is a well-studied graph invariant that is related to other properties of the graph such as connectivity and expansion. Given $n$ and $m$, $\alpha(n,m)$ is the maximum algebraic connectivity of a graph with $n$ vertices and $m$ edges. In 2015, Kolokolnikov conjectured that $\alpha(n,2n-4)=2$ for $n\geq 4$, and verified this claim computationally for $n \le 12$. In this paper, we prove Kolokolnikov's conjecture. We also show that  $\alpha(n,3(n-3)) = 3$ is false in general. 
	\end{abstract}
	
	{\bf AMS Subject Classification(2010):} 05C50; 05C35.
	
	\textbf{Keywords:} Algebraic connectivity, Extremal graphs, Independent sets, Schur complement.
	
	\section{Introduction}
	
	Throughout this paper, all graphs are finite, simple, and undirected. Let $G=(V(G),E(G))$ be a graph of order $n = |V(G)|$ and size $m = |E(G)|$. The degree of a vertex $v \in V(G)$ is denoted by $d_G(v)$, or simply $d(v)$ when the underlying graph is clear from the context. The adjacency matrix of $G$ is denoted by $A = A(G)$, and its diagonal degree matrix $D=D(G)$ is the diagonal matrix whose entries are the degrees of the corresponding vertices of $G$. The Laplacian matrix of $G$, denoted by $L(G)$ or simply $L$, is defined as $L = L(G) = D - A$. It is known that $L$ is a real symmetric positive semidefinite matrix and that the multiplicity of the eigenvalue $0$ equals the number of connected components of $G$, see \cite[Prop.1.3.7]{BrouwerHaemers} for example. Therefore,  the eigenvalues of $L$ can be arranged as $0 = a_1(G) \le a_2(G) \le \cdots \le a_n(G)$ .
	
	In 1970s, Fiedler \cite{fiedler73} proved that for non-complete graphs $a_2(G)$ is a lower bound for the vertex-connectivity of $G$ and called $a_2(G)$ the \emph{algebraic connectivity} of $G$. Since then, this parameter has been well studied in various theoretical and practical contexts, see \cite{fiedler73,MF75,BM92,SpielmanTeng} for example. For the rest of the paper, we will use the notation $a(G) = a_2(G)$.
	
	A natural problem is to maximize the
	algebraic connectivity over all graphs having a prescribed order and size.
	For positive integers \(n\) and \(m\), let
	\[
	\mathcal{G}_{n,m}
	=
	\{G:|V(G)|=n,\ |E(G)|=m\},
	\]
	and define
	\[
	\alpha(n,m)
	=
	\max\{a(G):G\in\mathcal{G}_{n,m}\}.
	\]
	A graph attaining this maximum is called an
	\emph{algebraic-connectivity maximizing graph} in
	\(\mathcal{G}_{n,m}\). Although algebraic connectivity is non-decreasing
	under the addition of edges, determining \(\alpha(n,m)\) is highly
	nontrivial because the placement of the prescribed number of edges has a
	substantial effect on the Laplacian spectrum, see also \cite{mosk2008}. When restricted to regular graphs of given order and size, this problem was studied by Nozaki \cite{NozakiLP}, see also \cite{CKNV}.

	It is known that $\alpha(n,n-1)=1$ and that the unique extremal graph is the star $K_{1,n-1}$ with $n$ vertices, see \cite{maas1987transportation, merris1987characteristic}.
	Lal, Patra and Sahoo \cite{lal2011algebraic} proved that $\alpha(n,n)$ is attained by the cycle $C_n$ for $n \in \{3,4,5\}$ and by $S_n^+$ for $n \geq 7$ uniquely, where $S_n^+$ is the graph obtained from star graph $K_{1,n-1}$ by adding one edge. For $n=6$,  $C_6$ and $S_6^+$ are two graphs that maximize the algebraic connectivity among all graphs on six vertices and six edges.
	
	In 2015, Kolokolnikov \cite{Kolo15} proposed the following conjecture.
	\begin{conj}\label{conj:Kolo}
		Let \(n\ge4\), and let \(G\) be a graph on \(n\) vertices and
		\(2(n-2)\) edges. Then
		\begin{equation}\label{eq:conjK}
			a(G)\le a(K_{2,n-2})=2.
		\end{equation}
	\end{conj}
	Equivalently, Conjecture~\ref{conj:Kolo} asserts that
	$\alpha(n,2n-4)=2$. In \cite{Kolo15}, the author verified this conjecture computationally using the package \textsc{nauty} for $n\leq 12$. These computations also identified, for each such $n$, the graphs attaining the largest algebraic connectivity.
	
	In this paper, we prove  Conjecture~\ref{conj:Kolo} for all $n \ge 12$ and the main result of our paper is the following theorem.
	\begin{theorem}\label{thm}
		If $G$ is a graph with $n \ge 12$ vertices and $2n-4$ edges, then
		\[
		a(G) \le 2.
		\]
		Consequently, $\alpha(n,2n-4) = 2$.
	\end{theorem}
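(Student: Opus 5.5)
The plan begins with the easy reductions and then uses a Schur complement argument on an independent set of low-degree vertices. Suppose, for contradiction, that $a(G)>2$.

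\textbf{Reduction to minimum degree $3$.} $G$ is connected and not complete, since $2n-4<\binom n2$ for $n\ge 12$. Fiedler's inequality then gives $a(G)\le \kappa(G)\le \delta(G)$, so we may assume $\delta(G)\ge 3$. The degree sum is $4n-8<4n$, so $\delta(G)=3$. Let $T$ be the set of degree-$3$ vertices and $t=|T|$. Since $\sum_v (d(v)-3)=n-8$, every vertex outside $T$ has excess at least $1$, so $t\ge 8$. More usefully, the excess $n-8$ is shared among only $n-t$ vertices, which forces most vertices to have degree $3$ or $4$.

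\textbf{Schur complement reformulation.} The condition $a(G)>2$ is equivalent to $L-2I$ having exactly one negative eigenvalue, namely $-2$ on the all-ones vector. Choose an independent set $S\subseteq T$ and write $W=V\setminus S$. The $S$-block of $L-2I$ is $(3-2)I=I$, which is positive definite. By Haynsworth inertia additivity, $L-2I$ has at most one negative eigenvalue iff the Schur complement
\[
M \;=\; L_{WW}-2I-B^{\top}B
\]
has at most one negative eigenvalue, where $B$ is the $S\times W$ incidence (adjacency) block. So the goal is to show that $M$ has at least two negative eigenvalues.

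The entries of $M$ are explicit. The diagonal entry at $w$ is $d(w)-2-|N(w)\cap S|$. The off-diagonal entry at $(w,w')$ is $-[ww'\in E]-|N(w)\cap N(w')\cap S|$. A direct count gives
\[
\operatorname{tr} M=\sum_{w\in W}d(w)-2|W|-3|S| = 2n-8-4|S|.
\]
Moreover $M\mathbf 1_W=-2\mathbf 1_W+(\text{terms from } S)$, which means the row sums of $M$ are computable from how $S$ attaches to $W$.

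\textbf{Exhibiting two negative directions.} To show that $M$ is negative on a $2$-dimensional subspace, I would pick two vectors with (nearly) disjoint supports in $W$ and check negativity of the $2\times 2$ compressed form. The natural candidates are the indicator vectors of two disjoint "clusters": a vertex $w$ together with the neighbours it shares with $S$. Each such cluster is heavily penalised by the $-B^{\top}B$ term. Concretely, for a vertex $w\in W$ with $k$ neighbours in $S$, the quadratic form on $e_w$ is $d(w)-2-k$, which is negative whenever $k>d(w)-2$. For pairs $w,w'$ sharing many $S$-neighbours, $(e_w+e_{w'})^{\top}M(e_w+e_{w'})$ is strongly negative. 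One then needs two such configurations that are far enough apart that the cross term is $\le 0$; cross terms of $M$ are always $\le 0$, so this direction is favourable. The argument would split into cases according to the degrees of the vertices adjacent to $T$: the near-$K_{2,n-2}$ case where two hubs absorb most of $T$, and the case where degrees are spread out, where the trace bound $\operatorname{tr}M<0$ combined with a second test vector (e.g.\ $\mathbf 1_W$ projected) should suffice once $|S|>(n-4)/2$.

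\textbf{Main obstacle.} The hard step is guaranteeing a sufficiently large or well-placed independent set $S$ of degree-$3$ vertices. The counting above only gives $t\ge 8$ in general. For large $n$, the case with few degree-$3$ vertices has almost all remaining vertices of degree exactly $4$ (the excess totals only $n-8$). I would therefore extend the Schur complement to an independent set containing degree-$4$ vertices as well. Their diagonal block is $2I$, which is still positive definite, and the same inertia argument goes through. The technical core would then be a greedy/counting lemma ensuring an independent set of vertices of degree $\le 4$ whose neighbourhoods concentrate enough to produce two disjoint negative test directions for $M$. The small residual cases would be handled by direct inspection, and the hypothesis $n\ge 12$ is presumably what makes the counting close.
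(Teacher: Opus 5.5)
Your reduction to $\delta(G)=3$ and the Schur complement set-up are correct. With $S=T$, which is available only when $T$ is independent, your $M$ is exactly the matrix the paper uses in Theorem~\ref{final-theorem}: diagonal entries $d_{T^c}(u)-2$ and off-diagonal entries $-(a_{uv}+c_{uv})$.

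\textbf{The gap.} The proposal stops where the proof begins. You never exhibit a second non-positive direction of $M$, and the ``greedy/counting lemma'' you defer is the whole content of the theorem. The heuristics you give for it also point the wrong way.
\begin{itemize}
\item \emph{Cross terms.} For test vectors $x,y$, the compressed form $\begin{pmatrix} x^TMx & x^TMy\\ x^TMy & y^TMy\end{pmatrix}$ is negative semidefinite only if $(x^TMx)(y^TMy)\ge (x^TMy)^2$. The sign of the cross term is irrelevant, and a large $|x^TMy|$ is exactly what destroys the second direction. So ``cross terms of $M$ are always $\le 0$, so this direction is favourable'' is backwards. You need clusters that are non-adjacent and share no $S$-neighbour, so that the cross term vanishes or is small relative to the diagonal terms.
\item \emph{Trace.} $\mathbf 1_W^TM\mathbf 1_W=-2|W|-6|S|<0$ only reconfirms the one negative eigenvalue $M$ has anyway, inherited from the eigenvalue $-2$ of $L-2I$. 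A negative $\operatorname{tr}M$ is compatible with that single eigenvalue being very negative and all others positive.
\item \emph{Target.} You should aim for $\lambda_2(M)\le 0$, not two negative eigenvalues. The configurations that actually arise give zero eigenvalues. Examples are the blocks $\begin{pmatrix}1&-1\\-1&1\end{pmatrix}$ in the paper's Claim~1, and two non-adjacent vertices with $d_W=2$ and no common $S$-neighbour. Insisting on strict negativity would force you to prove $a(G)<2$ where only $a(G)\le 2$ is available.
\end{itemize}

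\textbf{How the paper closes the gap.} It turns the correct version of your idea into a counting argument. Assume $a(G)>2$; then $M$ has exactly one negative eigenvalue and no zero eigenvalue. Let $U=\{u\in T^c: d(u)\ge 5,\ d_{T^c}(u)\le 2\}$, whose diagonal entries in $M$ are $\le 0$. By interlacing, every $2\times 2$ principal submatrix on two vertices of $U$ must have negative determinant. That is, $w_{uv}^2>(2-d_{T^c}(u))(2-d_{T^c}(v))$ holds for \emph{every} pair. Summing these lower bounds over all pairs and comparing with the available weight $e(U)+\sum_{x\in T}\binom{d_U(x)}{2}$ yields $F(l_0,l_1,l_2)>0$, a contradiction. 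The available weight is controlled by three facts:
\begin{itemize}
\item the excess identity $\sum_{u\in T^c}(d(u)-4)=t-8$;
\item the bound $|R|\le t+1$;
\item the inequality $3l_0+2l_1+l_2\ge 15$.
\end{itemize}

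\textbf{Missing preliminary cases.} This step only becomes available after a chain of reductions your sketch does not address:
\begin{itemize}
\item $T$ not independent, and a degree-$4$ vertex with two $T$-neighbours: Liu's lemma applied to a small set of boundary ratio $2$ and the vertices at distance at least two from it.
\item No degree-$4$ vertices: Liu's lemma on pairs of sets $\{u\}\cup N_T(u)$, plus counting.
\item $14\le n\le 19$, and $r_x=3$: explicit test vectors.
\item Adjacent vertices of $R$: the three-part partition bound or Liu's lemma.
\end{itemize}
``Small residual cases by direct inspection'' and an unspecified independent set containing degree-$4$ vertices do not substitute for these.
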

	Independently of our work and around the same time, a proof of Conjecture~\ref{conj:Kolo} for $n\geq 123$ has been posted in  \cite{zhu2026maximizingalgebraicconnectivity2n2} and another proof has appeared in  \cite{chi2026}.

	Our arguments are entirely structural for \(n\ge12\) and do not depend on computer enumeration. In  addition to settling Conjecture~\ref{conj:Kolo}, our tools
	show how variational methods, degree-deficit counting, small edge boundary density and inertia techniques can be combined in algebraic-connectivity maximization problems
	with a prescribed number of vertices and edges and may be useful for other related problems.
	
	This paper is organized as follows. In Section \ref{nota-tools}, we introduce the notation and collect the relevant known results. In Section \ref{outline}, we provide an outline of the proof of the conjecture. In Section \ref{observation-sec}, we establish several observations that will be useful in the proof. In Section \ref{proof-main}, we present the proof of the conjecture. Finally, in Section \ref{final-rem}, we conclude with some remarks. 
	
	\section{Notations and Tools}\label{nota-tools}
	
	Let $G$ be a graph and $u,v$ be two distinct vertices of $G$. We write $u \sim v$ if $u$ and $v$ are adjacent. For a subset $P$ of $V(G)$ denote by $N(P)$ the set of vertices that are adjacent to at least one vertex in $P$. For a vertex $u \in V(G)$, by $N_P(u)$ we mean the set of vertices in $P$ that are adjacent to $u$ and we write $d_P(u) =\vert N_P(u) \vert$. We use $G[P]$ to denote the subgraph of $G$ induced by the vertices in $P$. For subsets $P, Q$ of $V(G)$, we denote by $e(P, Q)$ the number of edges in $G$ with one endpoint in $P$ and the other in $ Q$. We denote $ e(P, P^c)$ by $\vert \partial P \vert $. Also, $e(P)$ denotes the number of edges in $G[P]$, that is, the number of edges with both endpoints in $P$. We use the notations $e(P)$ and $e(G[P])$ interchangeably. The set $P$ is called an independent set if no two vertices in $P$ are adjacent.
	
	The algebraic connectivity admits the well-known variational characterization
	\[
	a(G)=
	\min_{\substack{x\in\mathbb{R}^{n}\setminus\{0\}\\
			x\perp\mathbf{1}}}
	\frac{x^{T}L(G)x}{x^{T}x}
	=
	\min_{\substack{x\in\mathbb{R}^{n}\setminus\{0\}\\
			x\perp\mathbf{1}}}
	\frac{\displaystyle\sum_{uv\in E(G)}(x_u-x_v)^2}
	{\displaystyle\sum_{v\in V(G)}x_v^2},
	\]
	where $\mathbf{1}$ denotes the all-ones vector of appropriate size and $x \perp \mathbf{1}$ means $\sum \limits_{u \in V} x_u=0$, see \cite{zbMATH00867649} for example. 
	Equivalently, 
	\begin{theorem}[\cite{MF75} ]\label{var_char_2}
		Let $G$ be a graph of order $n$. Then
		\[
		a(G)
		=
		\min
		\frac{
			n \displaystyle\sum_{ij\in E}(x_i-x_j)^2
		}{
			\displaystyle\sum_{i,j\in V,\; i<j}(x_i-x_j)^2
		},
		\]
		where the minimum is taken over all non-constant vectors
		$x\in\mathbb{R}^n$.
	\end{theorem}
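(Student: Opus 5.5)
The plan is to reduce the statement to the Rayleigh-quotient characterization of $a(G)$ displayed just before it. Two facts do the work. First, both the numerator and the denominator of the quotient in Theorem~\ref{var_char_2} are invariant under translating $x$ by a constant vector. Second, for vectors orthogonal to $\mathbf{1}$, the denominator is exactly $n$ times $x^Tx$.

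First, the algebraic identity. For any $x\in\mathbb{R}^n$, expanding the square gives
\[
\sum_{i<j}(x_i-x_j)^2=\frac12\sum_{i,j\in V}(x_i-x_j)^2
=\frac12\Bigl(2n\sum_i x_i^2-2\bigl(\textstyle\sum_i x_i\bigr)^2\Bigr)
=n\sum_i x_i^2-\Bigl(\sum_i x_i\Bigr)^2 .
\]
In particular, if $x\perp\mathbf{1}$, then $\sum_{i<j}(x_i-x_j)^2=n\,x^Tx$. Also, $\sum_{i<j}(x_i-x_j)^2=0$ exactly when $x$ is constant. So the quotient is well defined precisely on non-constant vectors.

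Next, the translation invariance. Replacing $x$ by $y=x-c\mathbf{1}$ leaves every difference $x_i-x_j$ unchanged. Hence both $\sum_{ij\in E}(x_i-x_j)^2=x^TLx$ and $\sum_{i<j}(x_i-x_j)^2$ are unchanged.

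Now let $x$ be non-constant and set $c=\frac1n\sum_i x_i$ and $y=x-c\mathbf{1}$. Then $y\perp\mathbf{1}$, and $y\neq 0$ because $x$ is non-constant. By the two facts above,
\[
\frac{n\sum_{ij\in E}(x_i-x_j)^2}{\sum_{i<j}(x_i-x_j)^2}
=\frac{n\,y^TLy}{n\,y^Ty}=\frac{y^TLy}{y^Ty}.
\]
Conversely, every nonzero $y\perp\mathbf{1}$ is non-constant: a constant vector orthogonal to $\mathbf{1}$ is zero. So $y$ is itself an admissible vector for the new quotient, and the two quotients agree on it.

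Therefore the set of values taken by the quotient in Theorem~\ref{var_char_2} over non-constant $x$ coincides with the set of Rayleigh quotients over nonzero $x\perp\mathbf{1}$. Their minima agree, and by the earlier variational formula this minimum is $a(G)$.

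There is no real obstacle. The only points needing care are the expansion of $\sum_{i<j}(x_i-x_j)^2$ and the remark that non-constant vectors correspond exactly, up to translation, to nonzero vectors orthogonal to $\mathbf{1}$.
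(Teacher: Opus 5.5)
Your proof is correct. It follows the route the paper has in mind: the paper gives no proof of its own, only citing Fiedler and introducing the statement as equivalent to the Rayleigh-quotient characterization, and your identity $\sum_{i<j}(x_i-x_j)^2=n\sum_i x_i^2-\bigl(\sum_i x_i\bigr)^2$ combined with mean-centering $x\mapsto x-\frac1n\bigl(\sum_i x_i\bigr)\mathbf{1}$ is exactly what justifies that equivalence, including the agreement of the two minima.
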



	This all-pairs form of the variational characterization is particularly
	convenient for constructing trial vectors that are constant on suitably
	chosen vertex subsets. These trial vectors will play an important role in our
	proof. The following results will be useful in the proofs presented in the subsequent sections.
	
	\begin{theorem}[{\cite{ZL22}, Theorem 1.1}]\label{part thm}
		Let $G$ be a connected graph with $n$ vertices, and let $V(G) = V_1 \cup V_2 \cup V_3$ be a partition of $G$ with $n_i = |V_i| \le n-2$ for $i = 1, 2, 3$. Then
		\begin{equation}
			a(G) \le \min \left\{ \frac{\beta - \sqrt{\beta^2 - 4\alpha \gamma}}{2\alpha} \right\}, \
		\end{equation}
		where $\alpha = n_1 n_2 n_3$, $\beta = (t_1 + t_2)n_2 n_3 + (t_2 + t_3)n_1 n_2 + (t_1 + t_3)n_1 n_3$, $\gamma = n(t_1 t_2 + t_2 t_3 + t_1 t_3)$, and $t_1 = e(V_1, V_2)$, $t_2 = e(V_1, V_3)$ and $t_3 = e(V_2, V_3)$ and minimum is taken over all admissible partitions.
	\end{theorem}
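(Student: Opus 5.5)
The plan is to apply the variational characterization (Theorem~\ref{var_char_2}, or equivalently the Rayleigh quotient with $x\perp\mathbf 1$) to trial vectors that are constant on each part of the partition. This reduces the problem to a $3\times 3$ generalized eigenvalue problem, whose nonzero eigenvalues are exactly the roots of $\alpha\lambda^2-\beta\lambda+\gamma$.

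Fix an admissible partition $V_1\cup V_2\cup V_3$ with all $n_i\ge 1$. For $y=(y_1,y_2,y_3)\in\mathbb R^3$, let $x\in\mathbb R^n$ take the value $y_i$ on $V_i$. Edges inside a part contribute nothing, so
\[
x^TLx=t_1(y_1-y_2)^2+t_2(y_1-y_3)^2+t_3(y_2-y_3)^2=y^TQy,
\]
where $Q$ is the Laplacian of the triangle on $\{1,2,3\}$ with edge weights $t_1,t_2,t_3$. Similarly $x^Tx=y^TNy$ with $N=\operatorname{diag}(n_1,n_2,n_3)$, and $x\perp\mathbf 1$ becomes $\sum_i n_iy_i=0$. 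Hence $a(G)$ is at most the minimum of $y^TQy/y^TNy$ over nonzero $y$ that are $N$-orthogonal to $(1,1,1)$. Setting $z=N^{1/2}y$, this minimum is the second smallest eigenvalue of the symmetric matrix $M=N^{-1/2}QN^{-1/2}$. Indeed, $N^{1/2}\mathbf 1$ spans the kernel of $M$, and the constraint says exactly that $z\perp N^{1/2}\mathbf 1$. One could equally phrase this with the all-pairs form of Theorem~\ref{var_char_2}, since its denominator for such $x$ is $n_1n_2(y_1-y_2)^2+n_1n_3(y_1-y_3)^2+n_2n_3(y_2-y_3)^2$.

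Next compute the eigenvalues of $M$ via $\det(Q-\lambda N)=0$. Expanding the determinant in powers of $\lambda$ gives the following coefficients.
\begin{itemize}
\item The constant term is $\det Q=0$.
\item The $\lambda^3$ coefficient is $-n_1n_2n_3=-\alpha$.
\item The $\lambda^2$ coefficient is $\sum_k Q_{kk}\,n_in_j$, summed over $\{i,j,k\}=\{1,2,3\}$. With $Q_{11}=t_1+t_2$, $Q_{22}=t_1+t_3$ and $Q_{33}=t_2+t_3$, this is exactly $\beta$.
\item The $\lambda$ coefficient is $-\sum_i n_i\cdot(\text{principal }2\times2\text{ minor of }Q\text{ omitting }i)$. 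By the matrix-tree theorem for a weighted triangle, each such minor equals $t_1t_2+t_2t_3+t_1t_3$, so this coefficient is $-n(t_1t_2+t_2t_3+t_1t_3)=-\gamma$.
\end{itemize}
Thus $\det(Q-\lambda N)=-\lambda(\alpha\lambda^2-\beta\lambda+\gamma)$. The eigenvalues of $M$ are $0$ together with the two roots of the quadratic. Because $M$ is real symmetric and positive semidefinite, these roots are real and nonnegative, so $\beta^2-4\alpha\gamma\ge0$. The second smallest eigenvalue of $M$ is therefore the smaller root $(\beta-\sqrt{\beta^2-4\alpha\gamma})/(2\alpha)$.

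Since this bound holds for every admissible partition, taking the minimum gives the theorem. The only points needing care are bookkeeping. First, one must check that the $\lambda$ and $\lambda^2$ coefficients come out exactly as $-\gamma$ and $\beta$; the $\lambda$ coefficient via the spanning-tree identity is the step I expect to be the main obstacle, though it is a routine check. Second, one must justify that $0$ is the smallest eigenvalue, which holds because $M$ is positive semidefinite, so that the "second" eigenvalue is indeed the smaller root. The hypotheses that $G$ is connected and $n_i\le n-2$ are not essential to this argument beyond ensuring $n_i\ge1$, so that $N$ is invertible and $\alpha>0$.
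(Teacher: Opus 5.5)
The paper gives no proof of this statement. It is quoted from \cite{ZL22} and used only through Remark~\ref{poly_trick}, in Case~1 of Lemma~\ref{adjacent-vertex-in-R}. So there is no in-paper argument to compare against. Your argument is a correct and complete self-contained proof.

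Restricting the Rayleigh quotient to vectors constant on the parts gives $a(G)\le\mu_2(N^{-1/2}QN^{-1/2})$. This is the second eigenvalue of the quotient matrix $N^{-1}Q$, i.e.\ the standard quotient/interlacing bound specialised to three parts. Your expansion of $\det(Q-\lambda N)$ checks out:
\begin{itemize}
\item The $\lambda^2$ coefficient is $\sum_k |\partial V_k|\prod_{j\ne k}n_j$, which is exactly $\beta$.
\item Each $2\times 2$ principal minor of $Q$ equals $t_1t_2+t_1t_3+t_2t_3$; for instance $(t_1+t_2)(t_1+t_3)-t_1^2$. Hence the linear coefficient is $-n(t_1t_2+t_1t_3+t_2t_3)=-\gamma$.
\item Positive semidefiniteness of $M$ then yields real, nonnegative roots, and the second-smallest eigenvalue is the smaller root.
\end{itemize}

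Two small remarks. First, you never need that $N^{1/2}\mathbf 1$ \emph{spans} $\ker M$. It suffices that it lies in the kernel and $M\succeq 0$, so that the minimum over its orthogonal complement is $\mu_2$, even if $\mu_2=0$. When $G$ is connected it does span the kernel: then at least two of the $t_i$ are positive, so the weighted triangle is connected. Second, the hypothesis $n_i\le n-2$ is automatic once all three parts are nonempty, which the formula needs anyway for $\alpha>0$.

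Your route also brings two side benefits. It shows $\beta^2-4\alpha\gamma\ge 0$ for free, which the statement takes for granted. And the same argument gives $a(G)\le\lambda_2(N^{-1}Q)$ for partitions into any number of nonempty parts.
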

	\begin{remark}\label{poly_trick}
		For a fixed partition, the right hand side in the upper bound is the smallest root of the polynomial $p(x)=\alpha x^2-\beta x+\gamma $. Suppose for $y \in \mathbb{R}$, we have that $p(y) \leq 0$. Since $\alpha > 0$, $p$ is an upward opening parabola, which forces $y$ to be at least the smallest root of $p(x)$. Thus, to prove $a(G)\leq y$, it suffices to show that $p(y) \leq 0$. In addition if $p(y) < 0$ then $a(G) < y$.  
	\end{remark}
	The following is one of the key tools used in the proof of the main result. 
	\begin{theorem}[{\cite{Liu}, Lemma 3.2}]\label{Liu}
		Let $G=(V,E)$ be a connected graph. If $X$ and $Y$ are non-empty vertex disjoint subsets of $V$ with $a(G) \geq \max \bigg\{\frac{|\partial X|}{|X|}, \frac{|\partial Y|}{|Y|}\bigg \}$, then 
		\begin{equation*}
			e(X,Y)^2\geq |X||Y|\left(a(G)-\frac{|\partial X|}{|X|}\right)\left(a(G)-\frac{|\partial Y|}{|Y|}\right).
		\end{equation*}
	\end{theorem}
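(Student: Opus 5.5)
The plan is to plug a single two-valued test vector into the Rayleigh-quotient characterization of $a(G)$ stated above, and then finish with AM--GM. Fix $s,t>0$ and define $f\in\mathbb{R}^n$ by $f_v=s$ for $v\in X$, $f_v=-t$ for $v\in Y$, and $f_v=0$ otherwise. Choose $s|X|=t|Y|$, for instance $s=|Y|$ and $t=|X|$. Then $f\perp\mathbf{1}$ and $f\neq 0$, so the variational characterization gives
\[
a(G)\bigl(s^2|X|+t^2|Y|\bigr)\le f^TL(G)f .
\]

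Next I will compute $f^TL(G)f=\sum_{uv\in E}(f_u-f_v)^2$ by sorting edges according to where their endpoints lie.
\begin{itemize}
\item Edges inside $X$, inside $Y$, or inside $V\setminus(X\cup Y)$ contribute $0$.
\item An edge from $X$ to $V\setminus(X\cup Y)$ contributes $s^2$.
\item An edge from $Y$ to $V\setminus(X\cup Y)$ contributes $t^2$.
\item An edge from $X$ to $Y$ contributes $(s+t)^2=s^2+t^2+2st$.
\end{itemize}
Since $|\partial X|$ counts both the $X$--$Y$ edges and the $X$--outside edges, and similarly for $|\partial Y|$, this bookkeeping yields
\[
f^TL(G)f=s^2|\partial X|+t^2|\partial Y|+2st\,e(X,Y).
\]
Substituting into the inequality above and rearranging gives
\[
2st\,e(X,Y)\ \ge\ s^2\bigl(a(G)|X|-|\partial X|\bigr)+t^2\bigl(a(G)|Y|-|\partial Y|\bigr).
\]

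Now I use the hypothesis $a(G)\ge\max\{|\partial X|/|X|,\,|\partial Y|/|Y|\}$, which makes both bracketed coefficients nonnegative. AM--GM ($p^2+q^2\ge 2pq$ with $p=s\sqrt{a|X|-|\partial X|}$ and $q=t\sqrt{a|Y|-|\partial Y|}$) then bounds the right-hand side below by
\[
2st\sqrt{\bigl(a|X|-|\partial X|\bigr)\bigl(a|Y|-|\partial Y|\bigr)}.
\]
Dividing by $2st>0$ and squaring (both sides are nonnegative) gives
\[
e(X,Y)^2\ge (a|X|-|\partial X|)(a|Y|-|\partial Y|)=|X||Y|\Bigl(a-\tfrac{|\partial X|}{|X|}\Bigr)\Bigl(a-\tfrac{|\partial Y|}{|Y|}\Bigr),
\]
which is the claim.

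The one point that needs care is the orthogonality constraint. If $f$ were not orthogonal to $\mathbf{1}$, projecting it would shrink the denominator and push the inequality the wrong way. That is why the ratio $s:t$ is fixed by $s|X|=t|Y|$ rather than left free. This is harmless, because the AM--GM step holds for every ratio, so no optimization over $s,t$ is needed. Connectivity of $G$ is not really used, beyond making the hypothesis meaningful.
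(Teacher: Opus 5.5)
Your proof is correct and complete. The vector $f$ (equal to $s$ on $X$, $-t$ on $Y$, $0$ elsewhere, with $s|X|=t|Y|$) is an admissible test vector. The edge bookkeeping $f^{T}L(G)f=s^{2}|\partial X|+t^{2}|\partial Y|+2st\,e(X,Y)$ is right, and the AM--GM step uses exactly the stated hypothesis. You are also right that connectivity plays no real role.

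The paper gives no proof of this statement; it imports it from \cite{Liu} as a black box, so there is no in-paper argument to compare yours against.

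One observation bears on how the paper uses the result. Your intermediate linear inequality
\[
2st\,e(X,Y)\ \ge\ s^{2}\bigl(a(G)|X|-|\partial X|\bigr)+t^{2}\bigl(a(G)|Y|-|\partial Y|\bigr)
\]
holds with no sign hypothesis at all. It already gives the consequence the paper actually applies, namely $e(X,Y)>0$, even in the slightly stronger form where only one of $a(G)>|\partial X|/|X|$ and $a(G)>|\partial Y|/|Y|$ is strict and the other is merely $\ge$. The squared product form loses this case, because when one factor vanishes it only yields $e(X,Y)^{2}\ge 0$.
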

	
	The following is an immediate consequence of the above theorem: if $X$ and $Y$ non-empty disjoint subsets of $V(G)$ such that $a(G)>\frac{|\partial X|}{|X|}$ and $a(G)>\frac{|\partial Y|}{|Y|}$, then there exists at least one edge between the sets $X$ and $Y$.
	
	\section{Outline of the proof}\label{outline}
	
	Our proof of the conjecture involves several steps, and we briefly outline the main ideas here. Since a disconnected graph
	has algebraic connectivity zero, it is enough to consider connected graphs. The minimum degree inequality $\delta(G) \leq \dfrac{4n-8}{n}$ forces $\delta(G) \leq 3$. If $\delta(G)\leq 2$, then as $a(G)$ is at most the minimum degree of $G$, we get that $a(G)\leq 2$. Thus we restrict our work to graphs with $\delta(G) =3$.
	Define
	\[
	T=\{v\in V(G):d(v)=3\}
	.
	\]
	Our proof strategy is summarized in Table \ref{tab:case-division}.
	
	\begin{table}[H]
		\centering
		\small
		\renewcommand{\arraystretch}{1.6}
		\setlength{\tabcolsep}{6pt}

		\begin{tabularx}{\textwidth}{
				|>{\centering\arraybackslash}p{1.6 cm}
				|>{\raggedright\arraybackslash}X
				|>{\centering\arraybackslash}p{3.8cm}|
			}
			\hline
			\textbf{Sr.\ No.}
			&
			\multicolumn{1}{c|}{\textbf{Cases}}
			&
			\textbf{Proof}
			\\
			\hline
			
			1
			&
			$\delta(G)\leq 2$;  $\delta(G)\geq 4$ are not possible
			&
			Observation \ref{min-degree-2}
			\\
			\hline
			
			2
			&
			$\delta(G)=3$ and $T$ is not an independent set, where $n \geq 12$
			&
			Theorem \ref{not-independent}
			\\
			\hline
			
			3
			&
			$\delta(G)=3$ and $T$ is an independent set
			&
			Cases \(3(a)\)--\(3(d)\)
			\\
			\hline
			
			\(3(a)\)
			&
			$G$ has no vertex of degree \(4\), where $n \geq 14$
			&
			Theorem \ref{deg5_vertices}
			\\
			\hline
			
			\(3(b)\)
			&
			$G$ has a vertex of degree \(4\), where
			\(14\leq n\leq19\)
			&
			Lemma \ref{small vertices}
			\\
			\hline
			
			\(3(c)\)
			&
			$G$ has a degree-\(4\) vertex \(u\in T^{c}\) such that
			\[
			d_T(u)\geq2,
			\qquad n\geq20
			\]
			&
			Lemma \ref{large-T-degree}
			\\
			\hline
			
			\(3(d)\)
			&
			Every degree $4$ vertex \(u\in T^{c}\) satisfies
			\[
			d_T(u)\leq1,
			\qquad n\geq32
			\]
			&
			Lemma \ref{adjacent-vertex-in-R} and \ref{large-R-degree}, and Theorem\ref{final-theorem}
			\\
			\hline

		\end{tabularx}
		
		\vspace{2mm}
		\caption{The sketch of the proof of Conjecture~\ref{conj:Kolo}.}
		\label{tab:case-division}
	\end{table}
	
	\section{Observations}\label{observation-sec}
	
	In this section, we collect several crucial observations that lead to the proof of the conjecture. Since the conjecture holds trivially for disconnected graphs, we assume throughout this manuscript that all graphs under consideration are connected. All throughout this section, $G$ is a graph with $n$ vertices and $2n-4$ edges.
	
	\begin{obs}\label{min-degree-2}
		Let $\delta(G)$ denote the minimum degree in the graph $G$. If $\delta(G)\leq 2$, then  $a(G)\leq \delta(G)\leq 2,$ and we are done in this case. Note that $\delta(G)\leq \frac{2(2(n-2))}{n}=4-\frac{8}{n}$. Therefore, $\delta(G)<4$ and consequently, $\delta(G)\leq 3$. Hence, we only need to check the case that $\delta(G)=3.$ 
	\end{obs}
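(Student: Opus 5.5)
The statement to be justified has two parts: the bound $a(G)\le\delta(G)$, and the fact that $\delta(G)\le 3$ for any graph with $n$ vertices and $2n-4$ edges. I would prove them separately and then combine them.

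\emph{The bound $a(G)\le \delta(G)$.} Assume $G$ is not complete; if $G=K_n$ then $2n-4=\binom n2$ forces $n\in\{1,4\}$, which is outside our range. Let $v$ be a vertex with $d(v)=\delta(G)$, and use the all-pairs form of Theorem~\ref{var_char_2} with the trial vector $x=e_v$ (indicator of $v$). This vector is non-constant. Its numerator is $n\sum_{ij\in E}(x_i-x_j)^2=n\,d(v)$, because exactly the edges incident to $v$ contribute $1$ each. Its denominator is $\sum_{i<j}(x_i-x_j)^2=n-1$, because exactly the pairs containing $v$ contribute. Hence
\[
a(G)\le \frac{n}{n-1}\,\delta(G).
\]
This is slightly weaker than needed, so I would instead invoke Fiedler's classical bound $a(G)\le \kappa(G)\le \delta(G)$ for non-complete graphs, which the introduction already cites. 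Alternatively, one can get it directly: take $x$ to be the eigenvector of $L$ restricted to the orthogonal complement of $\mathbf 1$, and note that $a(G)\le \frac{n}{n-1}\min_v L_{vv}$ together with interlacing from the principal submatrix gives $a(G)\le \delta$. Citing Fiedler is the cleanest route. Either way, if $\delta(G)\le 2$ then $a(G)\le 2$.

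\emph{The bound $\delta(G)\le 3$.} By the handshake lemma,
\[
n\,\delta(G)\le\sum_v d(v)=2(2n-4),
\]
so $\delta(G)\le 4-8/n<4$. Since $\delta(G)$ is an integer, $\delta(G)\le 3$. Combining the two parts, the only remaining case is $\delta(G)=3$.

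There is no real obstacle here; the only delicate point is making sure $G$ is not complete, so that Fiedler's bound applies, and that check was done at the start.
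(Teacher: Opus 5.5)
Your proof is correct and follows the paper's route: you cite the known bound $a(G)\le\delta(G)$ (Fiedler) and use the handshake count $n\,\delta(G)\le 4n-8$ to get $\delta(G)\le 3$. Two small repairs are needed. First, $\binom{n}{2}=2n-4$ reduces to $n^2-5n+8=0$, which has no real roots, so $G$ is never complete; your claim $n\in\{1,4\}$ is an arithmetic slip, though it does not affect the conclusion. Second, the ``interlacing'' alternative is not an argument as written, so drop it in favor of the citation.
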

	
	Thus we now consider only the graphs $G$ with $\delta(G)=3$.
	\begin{obs}\label{t_geq_8}
		Define
		\begin{equation}
			T=\biggl\{u \in V(G) : d(u) =3\biggl\}.
		\end{equation}
		and $t = \vert T \vert$. Note that 
		\begin{equation}\label{eq:T}
			4n-8=2m = \sum\limits_{u \in V(G)} d(u) \geq 3t + 4(n-t).
		\end{equation}
		Hence, $t\geq 8$.
	\end{obs}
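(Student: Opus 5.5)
The claim $t\ge 8$ follows from a single degree count. Since $\delta(G)=3$ by Observation~\ref{min-degree-2}, every vertex outside $T$ has degree at least $4$, and every vertex in $T$ has degree exactly $3$.

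First apply the handshake lemma: the degree sum is $2m = 2(2n-4)=4n-8$. Split the sum over $T$ and $V(G)\setminus T$, and bound each term from below to get
\[
4n-8=\sum_{u\in T} d(u)+\sum_{u\notin T} d(u)\ \ge\ 3t+4(n-t)=4n-t.
\]
This is exactly inequality \eqref{eq:T}.

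Rearranging $4n-8\ge 4n-t$ gives $t\ge 8$. More precisely, $t=8+\sum_{u\notin T}(d(u)-4)$, so $t\ge 8$, with equality exactly when every vertex outside $T$ has degree $4$.

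There is no real obstacle here. The only point to check is that the minimum-degree reduction has already been made, so that vertices outside $T$ have degree at least $4$ rather than at most $2$. Once that is in place, the argument is the one-line computation above.
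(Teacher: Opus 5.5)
Your proof is correct and matches the paper's argument: with $\delta(G)=3$, the handshake lemma gives $4n-8\ge 3t+4(n-t)=4n-t$, hence $t\ge 8$. Your identity $t=8+\sum_{u\notin T}(d(u)-4)$ is exactly the excess-degree observation the paper states next.
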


	The excess degree of a vertex $u$ in $T^c$ is $d_G(u)-4$. The following observation will also be used in our proof.
	\begin{obs}\label{excess degree}
		$$\sum\limits_{u \in T^c}(d(u)-4)=t-8.$$  
	\end{obs}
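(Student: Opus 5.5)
The identity is a double count of degrees via the handshake lemma. We split the degree sum into the contributions from $T$ and from $T^c$. Every vertex of $T$ has degree exactly $3$, and every vertex of $T^c$ has degree at least $4$, since $\delta(G)=3$ by Observation~\ref{min-degree-2}. So the excess $d(u)-4$ is a nonnegative integer for each $u\in T^c$, and the sum in the statement is well defined and nonnegative. This also recovers $t\ge 8$ from Observation~\ref{t_geq_8}.

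Concretely, we write
\[
4n-8 \;=\; 2m \;=\; \sum_{u\in V(G)} d(u) \;=\; \sum_{u\in T} d(u) + \sum_{u\in T^c} d(u) \;=\; 3t + \sum_{u\in T^c} d(u).
\]
Then we rewrite the last sum as $\sum_{u\in T^c}\bigl(d(u)-4\bigr) + 4|T^c|$, using $|T^c| = n-t$. This gives
\[
4n-8 \;=\; 3t + 4(n-t) + \sum_{u\in T^c}\bigl(d(u)-4\bigr) \;=\; 4n - t + \sum_{u\in T^c}\bigl(d(u)-4\bigr),
\]
and rearranging yields $\sum_{u\in T^c}(d(u)-4) = t-8$.

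There is no real obstacle here. The only point to check is that the split of $V(G)$ into $T$ and $T^c$ is exhaustive and that $d(u)=3$ exactly on $T$; both follow from the definition of $T$ and from $\delta(G)=3$. In effect this is the inequality~\eqref{eq:T} of Observation~\ref{t_geq_8} made exact, with the slack in that inequality identified as the total excess degree over $T^c$.
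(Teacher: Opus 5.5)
Your proof is correct and is essentially the paper's argument: apply the handshake identity $4n-8=3t+\sum_{u\in T^c}d(u)$, then rewrite the $T^c$ sum as $4(n-t)$ plus the total excess. The hypothesis $\delta(G)=3$ matters only for your side remark that each excess $d(u)-4$ is nonnegative (and hence $t\ge 8$); the identity itself uses only the definition of $T$.
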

	\begin{proof}
		Note that $$4n-8=2m=\sum\limits_{u\in T}d(u)+\sum\limits_{v\in T^c}d(v)=3t+\sum_{v\in T^c}d(v)$$
		and therefore,
		\begin{equation*}
			\sum_{v\in T^c}d(v)=4n-8-3t=4(n-t)+t-8,
		\end{equation*}
		and 
		\begin{equation*}
			\sum_{v\in T^c}(d(v)-4)=t-8.
		\end{equation*}
		
	\end{proof}
	
	\section{Proof of the Conjecture \ref{conj:Kolo}} \label{proof-main}
	In this section, we prove Conjecture~\ref{conj:Kolo}. We divide the proof into two cases according to the structure of the subgraph induced by the set
	\[
	T=\{v\in V(G): d_G(v)=3\}.
	\]
	In Subsection~\ref{not_indep}, we prove the conjecture when $T$ is not an independent set. This case admits a comparatively short proof using Theorem \ref{Liu}. In Subsection~\ref{indep}, we consider the remaining case in which $T$ is an independent set. This case requires a more detailed structural analysis together with variational and Schur complement techniques.
	\subsection{The set $T$ is not independent}\label{not_indep}
	\begin{theorem}\label{not-independent}
		Let $G$ be a graph on $n$ vertices and $2n-4$ edges, where $n \geq 12$. If $T$ is not an independent set, then $a(G) \leq 2.$
	\end{theorem}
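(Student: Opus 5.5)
Assume for contradiction that $a(G)>2$; in particular $G$ is connected and, by Observation~\ref{min-degree-2}, $\delta(G)=3$. The plan is to apply Theorem~\ref{Liu} to two vertex sets that have no edge between them but both have small edge boundary. Since $T$ is not independent, there are adjacent vertices $u,v\in T$. Take $X=\{u,v\}$. Each of $u,v$ has exactly two neighbours outside $X$, so $|\partial X|=4$ and $|\partial X|/|X|=2<a(G)$.

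Let $W=N(X)\setminus X$, so $k=|W|\le 4$, and let $Y=V(G)\setminus(X\cup W)$. Then $e(X,Y)=0$ by construction, and $|Y|=n-2-k\ge n-6>0$ for $n\ge 12$. If we show $|\partial Y|/|Y|\le 2$, then $a(G)>\max\{|\partial X|/|X|,|\partial Y|/|Y|\}$. The consequence of Theorem~\ref{Liu} stated after it would then force an edge between $X$ and $Y$, which is the desired contradiction.

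To bound $|\partial Y|=e(W,Y)$, count edges by location. There are exactly $5$ edges incident to $X$: the edge $uv$ and the four edges from $X$ to $W$. Hence
\[
2n-4=m=5+e(W)+e(W,Y)+e(Y)\ge 5+|\partial Y|+e(Y).
\]
Every vertex of $Y$ has degree at least $3$, and all its neighbours lie in $Y\cup W$, so
\[
2e(Y)+|\partial Y|=\sum_{y\in Y}d(y)\ge 3|Y|.
\]
Substituting $e(Y)\ge (3|Y|-|\partial Y|)/2$ into the first inequality gives
\[
|\partial Y|\le 4n-18-3|Y|.
\]
Therefore $|\partial Y|\le 2|Y|$ holds as soon as $5|Y|\ge 4n-18$. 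Since $|Y|\ge n-6$, it suffices that $5(n-6)\ge 4n-18$, that is, $n\ge 12$. If $u$ and $v$ share neighbours then $k<4$, so $|Y|$ is larger and the inequality only improves.

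I expect the main delicate point to be the boundary case $n=12$, $k=4$, where $|\partial Y|/|Y|$ may equal exactly $2$. This causes no problem: we assumed $a(G)>2$ strictly, so both ratios are still strictly below $a(G)$ and the consequence of Theorem~\ref{Liu} applies. It also explains why the theorem is stated for $n\ge 12$. The remaining checks are routine: $X$ and $Y$ are nonempty and disjoint, and $G$ is connected.
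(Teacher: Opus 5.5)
Your proof is correct and follows the paper's argument. It uses the same sets $X=\{u,v\}$, $W=N(X)\setminus X$ (the paper's $Q$) and $Y$, the same appeal to Theorem~\ref{Liu}, and gets the threshold $n\ge 12$ from $|W|\le 4$ in the same way. The only difference is bookkeeping: you bound $|\partial Y|$ via the global edge count and $\delta(G)\ge 3$ on $Y$, whereas the paper sums degrees over $Q$ and uses the excess-degree identity of Observation~\ref{excess degree}; both give $2|Y|-|\partial Y|\ge n-5|W|+8\ge n-12$, with the paper additionally keeping the nonnegative terms $2e(Q)$ and $n-|Q\cup T|$.
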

	\begin{proof}
		Let $x_1x_2$ be an edge in  $G[T]$. Define $X=\{x_1,x_2\}$. Since $d(x_1)=d(x_2)=3$, we have $\vert \partial X \vert =4$ and $\frac{\vert \partial X \vert}{\vert X \vert}=2$.
		Let $Q=N(X) \setminus X$. Again by the degree argument, it is clear that $|Q| \leq 4$. 
		
		Consider the set $Y=V(G) \setminus (X \cup Q)$. Then $|Y|=n-\vert Q \vert-2$.\\
		Now, 
		$\vert \partial Y \vert=e(Q,Y)+e(Y,X)$. Since $e(Y,X)=0$, we have $\vert \partial Y \vert=e(Q,Y)$. Hence, 
		\begin{align*}
			e(Q,Y)&=   \sum\limits_{u \in Q} d(u)-2e(Q)-e(Q,X)\\
			&=\sum\limits_{u \in Q \cap T} d(u)+\sum\limits_{u \in Q \cap T^c} d(u)-2e(Q)-4\\
			&=3\vert Q \cap T \vert+\sum\limits_{u \in Q \cap T^c} (d(u)-4)+4(\vert Q \vert - \vert Q \cap T \vert)-2e(Q)-4\\
			\text{So,} \quad \quad \quad  \vert \partial Y \vert =e(Q,Y) &= 4\vert Q \vert - \vert Q \cap T \vert +\sum\limits_{u \in  Q \cap T^c} (d(u)-4)-2e(Q)-4.\\
		\end{align*}
		Therefore,
		\begin{align*}\label{Part/Card}
			2\vert Y \vert- \vert \partial Y\vert&= 2n-2\vert Q \vert -4 -4\vert Q \vert + \vert Q \cap T \vert -\sum\limits_{u \in  Q \cap T^c} (d(u)-4)+2e(Q)+4  \notag \\
			&\geq 2n-6\vert Q \vert + \vert Q \cap T \vert -\sum\limits_{u \in   T^c} (d(u)-4)+2e(Q) \notag \\
			&=2n-6\vert Q \vert + \vert Q \cap T \vert -t + 8 +2e(Q) \quad \quad \mbox{(Using Observation \ref{excess degree})} \notag \\
			&=(2n-5|Q|+8)-(|Q|+|T|-|Q\cap T|)+2e(Q) \notag \\
			&=(2n-5|Q|+8)-|Q\cup T|+2e(Q) \\
			&=(n-5|Q|+8)+(n-|Q\cup T|)+2e(Q)\\
			&\geq (n-12)+0+0, \text{ as } |Q|\leq 4\\
			&\geq 0, \text{ as } n\geq 12.
		\end{align*} 
		Hence, $2|Y|\geq |\partial Y|$.
		
		
		
		
		By contradiction, assume that $a(G) > 2$. As $\frac{\vert \partial Y \vert}{\vert Y \vert}\leq 2$ and $\frac{\vert \partial X \vert}{\vert X \vert}= 2$, by using Theorem \ref{Liu} for the sets $X$ and $Y$, we deduce that $e(X,Y) >0 $. This is a contradiction with $e(X,Y)=0$. Hence, $a(G) \leq 2$.
	\end{proof}
	
	\subsection{The set $T$ is independent}\label{indep}
	
	We now focus on the case when $T$ is independent. Since $T$ is independent and every vertex of $T$ has degree $3$, $e(T,T^c)=3t$. Therefore, $$ e(G[T^c]) =\vert E(G) \vert-e(T,T^c),$$ and hence
	\begin{equation}\label{size of T}
		e(G[T^c])= 2n-4-3t.  
	\end{equation}
	Since $e(G[T^c])\geq 0$ and $t \geq 8$, we get that $n \geq 14$.
	
	Next, we consider two cases: (i) the graph \(G\) has no vertex of degree \(4\), and (ii) the graph \(G\) contains at least one vertex of degree \(4\).
	\subsubsection{Case (i) : The graph $G$ has no degree $4$ vertex}
	In the following observations, we assume that $T$ is an independent set and every vertex in $T^c$ has degree at least $5$. We prove the result below as a consequence of a series of observations.
	\begin{theorem}\label{deg5_vertices}
		Let $n \geq 14$. If $T$ forms an independent set and $G$ does not contain a degree $4$ vertex, then $a(G) \leq 2$.
	\end{theorem}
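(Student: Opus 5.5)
The plan is to apply Theorem~\ref{Liu} to two disjoint, non-adjacent ``stars'' rooted at vertices of $T^c$. Under the hypotheses, every $w\in T^c$ has $d(w)\ge 5$. Write $s=|T^c|=n-t$.

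\textbf{Step 1 (counting).} Observation~\ref{excess degree} gives $\sum_{w\in T^c}(d(w)-4)=t-8$, and each summand is at least $1$, so $s\le t-8$. By \eqref{size of T}, $e(G[T^c])=2n-4-3t=2s-t-4\le s-12$. Hence $\sum_{w\in T^c}d_{T^c}(w)\le 2s-24$. Consequently at least $12$ vertices $w\in T^c$ have $d_{T^c}(w)\le 1$; call them \emph{light}.

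\textbf{Step 2 (stars with small boundary).} For a light vertex $w$, let $k=d_T(w)\ge 4$ and set $X_w=\{w\}\cup N_T(w)$. Since $T$ is independent, every $u\in N_T(w)$ sends its other two edges to $T^c\setminus\{w\}$. Therefore
\[
|\partial X_w| = 2k+d_{T^c}(w)\le 2k+1,
\qquad
\frac{|\partial X_w|}{|X_w|}\le \frac{2k+1}{k+1}<2 .
\]

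\textbf{Step 3 (two disjoint stars).} Suppose $a(G)>2$. If there are two light vertices $w,w'$ such that $X_w\cap X_{w'}=\emptyset$ and $e(X_w,X_{w'})=0$, then Theorem~\ref{Liu} forces $e(X_w,X_{w'})>0$, a contradiction, and so $a(G)\le 2$.

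Two stars $X_w,X_{w'}$ fail this test only if one of three things happens:
\begin{itemize}
\item[(a)] $w\sim w'$;
\item[(b)] $w$ and $w'$ have a common neighbour in $T$;
\item[(c)] some $u\in N_T(w)$ is adjacent to $w'$, which is the same as (b).
\end{itemize}
Build a conflict graph on the $\ge 12$ light vertices, joining two of them when they conflict. Each light vertex has at most one $T^c$-neighbour, so adjacency contributes at most one conflict per vertex. Each $T$-vertex lies in at most $3$ stars, so it creates at most $3$ conflicting pairs. I would bound the number of conflicting pairs against $\binom{12}{2}$, using the spare degree budget: a vertex of $T$ adjacent to several light vertices uses up edges that the tight count $\sum(d(w)-4)=t-8$ together with $s\le t-8$ controls. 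The aim is to conclude that the conflict graph is not complete, so a valid pair exists.

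\textbf{Main obstacle.} Step 3 is where I expect the difficulty. The $T$-vertices can in principle be numerous enough to make all light vertices pairwise conflict through shared $T$-neighbours.

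If the pigeonhole argument fails, my fallback is to keep the single set $X=X_w$ and take $Y=V\setminus(X\cup N(X))$, as in Theorem~\ref{not-independent}. Then $N(X)\setminus X$ consists of at most $2k+1$ vertices of $T^c$, and I would bound $|\partial Y|\le 2|Y|$ by a degree-excess computation using Observation~\ref{excess degree}. Alternatively, I would apply Theorem~\ref{part thm} to the partition $X,\ N(X)\setminus X,\ Y$ and use Remark~\ref{poly_trick} to check that $p(2)\le 0$.
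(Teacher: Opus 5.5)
\textbf{Verdict: there is a genuine gap.} The counting step that finishes the proof is missing, and your fallback rests on an inequality that is false in general.

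\textbf{What is correct.} Steps~1 and~2 are correct. So is the reduction in Step~3. Two light vertices $w,w'$ give disjoint stars with $e(X_w,X_{w'})=0$ exactly when $w\nsim w'$ and $N_T(w)\cap N_T(w')=\emptyset$, and such a pair contradicts Theorem~\ref{Liu} if $a(G)>2$. This is the same mechanism as the paper's proof.

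\textbf{Why your count does not close.} The explicit estimates you list (at most $3$ pairs per vertex of $T$, at most one adjacency per light vertex) bound the conflicting pairs by roughly $3t+L/2$, where $L$ is the number of light vertices. Step~1, however, gives only $L\ge 12$, a bound that does not grow with $t$. Indeed $L=12$ is consistent with everything in Step~1 for arbitrarily large $t$: take all of $T^c$ of degree $5$ and $G[T^c]$ a cycle on $s-12\ge 3$ vertices plus $12$ isolated vertices. For $t\ge 22$ we have $3t\ge\binom{12}{2}$, so no contradiction follows. The ``spare degree budget'' idea is never turned into an inequality.

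\textbf{The fallback fails.} Take a Steiner triple system on $13$ points and delete one point. Let $G$ be the point--triple incidence graph of the remaining $20$ triples on $12$ points. Then $n=32$, $m=60$, $T$ is the set of triples, and every point has degree $5$. For every point $w$ you get $|Y|=16$ but $|\partial Y|=40>2|Y|$. So no degree-excess computation can give $|\partial Y|\le 2|Y|$. The alternative via Theorem~\ref{part thm} is only named, not checked.

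\textbf{How the paper closes it.} It enlarges your set. Because $a(G)>2$ is assumed strictly, $|\partial X_u|\le 2|X_u|$ is all Theorem~\ref{Liu} needs. So one can use $S=\{u\in T^c: d_{T^c}(u)\le 2\}$, which satisfies $|S|\ge (t+16)/3$ and therefore grows with $t$. Then the crude estimate $\binom{|S|}{2}\le e(S)+3t$, together with $e(S)\le (n+3|S|-3t-8)/2$ and $n\le 2t-8$, gives $|S|^2-4|S|\le 5t-16$. This contradicts $|S|\ge (t+16)/3$.

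\textbf{How to repair your own route.} You can keep your light set $\Lambda$, but charge shared neighbours to incidences with $\Lambda$ rather than to all of $T$. Let $L_i$ be the number of light vertices with $d_{T^c}=i$, and let $\varepsilon=t-8-s=\sum_{w\in T^c}(d(w)-5)\ge 0$. Using $d_\Lambda(x)\le 3$ for $x\in T$,
\[
\sum_{x\in T}\binom{d_\Lambda(x)}{2}\le\sum_{x\in T}d_\Lambda(x)=\sum_{w\in\Lambda}\bigl(d(w)-d_{T^c}(w)\bigr)\le 5L+\varepsilon-L_1 .
\]
Counting degrees inside $T^c$ gives $2e(T^c)=2s-24-2\varepsilon\ge 2s-2L_0-L_1$, hence $\varepsilon\le L_0+L_1/2-12$. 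Now add the at most $L_1/2$ adjacent light pairs. If $a(G)>2$, all $\binom{L}{2}$ pairs conflict, which forces
\[
\binom{L}{2}\le 5L+L_0-12\le 6L-12 .
\]
This fails for every $L\ge 12$, which completes the argument.
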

	\begin{obs}\label{LB of T}
		$t \geq 20$.
		\begin{proof}
			Because $2n=3t+4+e(G[T^c])$, we deduce that $2n\geq 3t+4$. From Observation \ref{excess degree} , we have that 
			\begin{align}\label{upper-bound-T-comp}
				t-8=\sum\limits_{u \in T^c}(d(u)-4)\geq |T^c|=n-t.
			\end{align}
			Hence, 
			\begin{equation}\label{n_leq_2t_minus_8}
				n\leq 2t-8.
			\end{equation} Therefore, $4t-16\geq 2n\geq 3t+4$ which implies that $t\geq 20$.
		\end{proof}
	\end{obs}
	
	Define
	$$
	S=\biggl\{u \in T^c | d_{T^c}(u)\leq 2 \biggr\}.
	$$
	Let $s=|S|$.
	\begin{obs}\label{LB of s}
		$s \geq \frac{t+16}{3}$.
		\begin{proof}
			For each vertex $u\in T^c \setminus S$, $d_{T^c}(u)\geq 3$. Therefore,
			\begin{equation*}
				4n-8-6t=2e(T^c)=\sum_{v\in T^c}d_{T^c}(v)\geq \sum_{u\in T^c\setminus S}d_{T^c}(u)\geq 3|T^c\setminus S|=3(n-t-s).
			\end{equation*}
			Hence, $3s\geq 3t+8-n$. By equation (\ref{n_leq_2t_minus_8}), $n\leq 2t-8$. Thus $3s\geq t+16$ and $s\geq \frac{t+16}{3}$.
		\end{proof}
	\end{obs}
	
	\begin{obs}\label{edges in S}
		\begin{equation*}
			e(S) \leq \frac{\vert T^c \vert -2 t -8+3s }{2}=\frac{n+3s-3t-8}{2}.
		\end{equation*}
		\begin{proof} We have that 
			\begin{equation*}
				2e(S)=\sum\limits_{u \in S}d_{S}(u)\leq \sum\limits_{u \in S}d_{T^c}(u).
			\end{equation*}
			For each vertex $u\in T^c \setminus S$, $d_{T^c}(u)\geq 3 $. Hence,  
			\begin{align*}
				\sum\limits_{u \in S}d_{T^c}(u)&=\sum\limits_{u \in T^c}d_{T^c}(u)-\sum\limits_{u \in T^c \setminus S}d_{T^c}(u)=2e(T^c)-\sum\limits_{u \in T^c \setminus S}d_{T^c}(u)\\
				&\leq 2e(T^c)-3(\vert T^c \vert -s)=2(2n-4-3t)-3(n-t-s)\\
				&=n+3s-3t-8.
			\end{align*}
			Hence, the result follows.
		\end{proof}
	\end{obs}
	
	For $u \in S$, define
	$X_u=\{u\} \cup N_{T}(u)$.
	\begin{obs}\label{edge expansion}
		$ \frac{\vert \partial X_u \vert}{\vert  X_u \vert} \leq 2$.
	\end{obs}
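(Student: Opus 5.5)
We compute $|\partial X_u|$ directly by counting the edges leaving each vertex of $X_u$; the only inputs are that $T$ is independent and the definition of $S$. Let $k=d_T(u)=|N_T(u)|$, so $|X_u|=k+1$. Since $u\in T^c$ and we are in Case (i), $d(u)\ge 5$. Because $u\in S$, we have $d_{T^c}(u)\le 2$, hence $k=d(u)-d_{T^c}(u)\ge 3$. The edges of $G[X_u]$ are exactly the $k$ edges from $u$ to $N_T(u)$. There are no edges inside $N_T(u)$, since $T$ is independent.

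The boundary edges are then counted as follows:
\begin{itemize}[leftmargin=*]
\item $u$ contributes $d_{T^c}(u)\le 2$ boundary edges.
\item Each $w\in N_T(u)$ has degree $3$ and one edge to $u$. It therefore contributes exactly $2$ boundary edges, since its other two neighbours lie in $T^c\setminus\{u\}$.
\end{itemize}
Hence
\[
|\partial X_u| = d_{T^c}(u) + 2k \le 2 + 2k = 2|X_u|,
\]
which gives $\frac{|\partial X_u|}{|X_u|}\le 2$.

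There is essentially no obstacle here. The only point needing care is that the two remaining neighbours of each $w\in N_T(u)$ lie outside $X_u$. This holds because $T$ is independent, so they are not in $N_T(u)$, and $w$ has no multiple edges to $u$.

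Note that the bound $d_{T^c}(u)\le 2$ from the definition of $S$ is exactly what is needed. The degree-$5$ hypothesis only guarantees $k\ge 3$, i.e.\ that $X_u$ is reasonably large, which may matter for later applications of Theorem~\ref{Liu} but is not required for the inequality itself.
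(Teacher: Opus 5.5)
Your proof is correct and is the same argument as the paper's. Both count $|\partial X_u| = 2d_T(u) + d_{T^c}(u)$ using the independence of $T$, then apply $d_{T^c}(u)\le 2$ from the definition of $S$. Your remark that $d_T(u)\ge 3$ in Case (i) is true but, as you note, not needed.
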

	\begin{proof}
		We have that $\vert X_u\vert=d_{T}(u)+1$. Because $T$ is independent, each vertex in $N_T(u)$ has exactly two neighbors outside $X_u$. Also, $u$ has exactly $d_{T^c}(u)$ neighbors outside $X_u$. Because $u\in S$, $d_{T^c}(u)\leq 2$. Therefore,
		\begin{equation*}
			\vert \partial X_u \vert=2d_{T}(u)+d_{T^c}(u)\leq 2d_T(u)+2=2|X_u|.
		\end{equation*}
	\end{proof}

	Define
	$$
	\mathcal{P}=\biggl\{\{u_1,u_2\} \subseteq S : N_T(u_1) \cap N_T(u_2)\neq \phi \biggr\}.
	$$
	\begin{obs}\label{non-adjacent pairs are in M}
		If $a(G) > 2$, $\{u_1,u_2\} \subseteq S$ and $u_1 \nsim u_2$ then $\{u_1,u_2\} \in \mathcal{P}$.
	\end{obs}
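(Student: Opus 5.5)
We argue by contradiction, applying Theorem~\ref{Liu} to the sets $X_{u_1}$ and $X_{u_2}$ from Observation~\ref{edge expansion}. Suppose $a(G)>2$, let $u_1,u_2\in S$ be distinct with $u_1\nsim u_2$, and suppose $\{u_1,u_2\}\notin\mathcal{P}$, i.e.\ $N_T(u_1)\cap N_T(u_2)=\emptyset$.

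First, $X_{u_1}=\{u_1\}\cup N_T(u_1)$ and $X_{u_2}=\{u_2\}\cup N_T(u_2)$ are non-empty. They are also disjoint. Indeed, $u_1\neq u_2$, both lie in $T^c$, and so neither lies in $T$; this gives $u_i\notin N_T(u_j)$. The $T$-parts are disjoint by assumption.

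By Observation~\ref{edge expansion}, $|\partial X_{u_i}|/|X_{u_i}|\le 2<a(G)$ for $i=1,2$. The consequence of Theorem~\ref{Liu} stated right after it then forces $e(X_{u_1},X_{u_2})>0$.

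It remains to show that $e(X_{u_1},X_{u_2})=0$, which gives the contradiction. We check the possible edge types one at a time.
\begin{itemize}
\item The pair $u_1u_2$ is not an edge, by hypothesis.
\item An edge between $u_1$ and some $w\in N_T(u_2)$ would make $w\in N_T(u_1)\cap N_T(u_2)$, which is empty. The same argument rules out edges between $u_2$ and $N_T(u_1)$.
\item There is no edge between $N_T(u_1)$ and $N_T(u_2)$, because both are subsets of $T$ and $T$ is independent.
\end{itemize}
Hence no edge joins the two sets, contradicting $e(X_{u_1},X_{u_2})>0$. Therefore $\{u_1,u_2\}\in\mathcal{P}$.

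The only delicate point is checking disjointness and the absence of cross edges carefully. Each case follows directly from the independence of $T$ and the non-adjacency hypothesis, so no real obstacle is expected.
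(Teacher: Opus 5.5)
Your proof is correct and follows the paper's argument: a contradiction obtained by applying Theorem~\ref{Liu} to $X_{u_1}$ and $X_{u_2}$, with Observation~\ref{edge expansion} supplying the boundary-ratio bounds. You also check explicitly that the two sets are disjoint and that there is no edge between $u_i$ and $N_T(u_j)$; the paper's one-line justification of $e(X_{u_1},X_{u_2})=0$ leaves these details implicit.
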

	\begin{proof}
		Assume that $a(G)>2$. By contradiction, suppose that there is $\{u_1,u_2\}\subseteq S$ such that   $u_1 \nsim u_2$ and $\{u_1,u_2\} \notin \mathcal{P}$. Hence, $N_T(u_1) \cap N_T(u_2)= \phi$. Because $u_1 \nsim u_2$ and $T$ is independent, $e(X_{u_1},X_{u_2})=0$. 
		
		By Observation \ref{edge expansion}, we have that $ \frac{\vert \partial X_{u_1} \vert}{\vert  X_{u_1} \vert} \leq 2$ and $ \frac{\vert \partial X_{u_2} \vert}{\vert  X_{u_2} \vert} \leq 2$. Using Theorem \ref{Liu} for the sets $X_{u_1}$ and $X_{u_2}$, we obtain that $e(X_{u_1},X_{u_2})>0$, which is a contradiction.
	\end{proof}

	\textbf{Proof of Theorem \ref{deg5_vertices}:}        By contradiction, assume that $a(G)> 2$. By Observation \ref{non-adjacent pairs are in M}, we have that
	$$
	\binom{s}{2}=\binom{\vert S \vert}{2}=e(G[S])+e(G^c[S])\leq e(G[S])+\vert \mathcal{P} \vert.
	$$
	A vertex $u\in T$ has degree $3$, so it has at most $3$ neighbors in $S$. It can therefore produce at most $\binom{3}{2}=3$ pairs of vertices in $S$ having $u$ as a common neighbor. Summing over $T$, the number of distinct pairs sharing a $T$-neighbor is at most $3t$. Thus, $|\mathcal{P}|\leq 3|T|$ and 
	$$
	\binom{s}{2}\leq e(G[S])+3 t.
	$$
	By using Observation \ref{edges in S}, we get
	\begin{align*}
		\frac{s(s-1)}{2} &\leq \frac{\vert T^c \vert -2 t-8+3s}{2}+3t\\
		s^2 -s &\leq \vert T^c \vert -2 t-8+3s+6t\\
		s^2-4s&\leq \vert T^c \vert +4 t-8.
	\end{align*}
	From equation \eqref{upper-bound-T-comp}, we get
	\begin{align}
		s^2 -4s \leq 5 t -16.\label{UB or r^2-4r}
	\end{align}
	Also, by Observation \ref{LB of T} and \ref{LB of s}, we obtain
	$$
	s \geq \frac{t +16}{3}\geq 12.
	$$
	Note that the function $x^2-4x$ is increasing for $x\geq 2$. Therefore
	\begin{align}
		s^2 -4s &\geq \Bigg(\frac{t +16}{3}\Bigg)^2-4 \Bigg(\frac{t +16}{3}\Bigg)\notag\\
		&=\frac{t ^2+32t+256-12 t-192}{9}\notag\\
		&=\frac{t^2+20t+64}{9}.\notag
	\end{align}
	But 
	$$
	\frac{t^2+20t+64}{9}-(5t-  16)=\frac{t^2+20t+64-45t+144}{9}=\frac{t ^2-25t +208}{9}.
	$$
	The polynomial 
	$$
	x^2-25x +208=(x -\frac{25}{2})^2+\frac{207}{4}
	$$
	is strictly positive for all real $x$. Hence
	$$
	s^2-4s >5 t -16,
	$$
	contradicting (\ref{UB or r^2-4r}). Therefore the assumption $a(G) >2$ is impossible, proving
	$$
	a(G)\leq 2.
	$$
	\qed
	\subsubsection{Case (ii): The graph $G$ has a degree $4$ vertex}
	In the following, we assume that $T^c$ has at least one vertex of degree $4$. 
	
	Recall that $n \geq 14$. In the following observations, we assume that $T$ is an independent set and there exists at least one vertex in $T^c$ having degree $4$.
	Define $H:= \{ u \in T^c: d(u) \geq 5\}$.  We first prove the conjecture for small numbers of vertices.
	\begin{lemma}\label{small vertices}
		Let $14 \leq n \leq 19$. If $T$ forms an independent set and $G$ has at least one vertex of degree $4$, then $a(G) \leq 2$.
	\end{lemma}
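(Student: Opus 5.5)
The plan is to exploit that, for $14\le n\le 19$, the parameters are confined to a short finite list, and to treat each configuration with Theorem~\ref{Liu}, just as in Theorem~\ref{not-independent} and Theorem~\ref{deg5_vertices}. Since $T$ is independent, \eqref{size of T} gives $e(G[T^c])=2n-4-3t\ge 0$, so $8\le t\le \lfloor (2n-4)/3\rfloor\le 11$. By Observation~\ref{excess degree}, the total excess $\sum_{u\in T^c}(d(u)-4)=t-8$ is at most $3$. So only a handful of pairs $(n,t)$ occur. In each, all but at most three vertices of $T^c$ have degree exactly $4$, and $G[T^c]$ has very few edges (at most $2n-4-24\le 10$). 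Throughout, I assume $a(G)>2$ and aim for a contradiction.

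\textbf{Step 1.} Widen the family of test sets. For $u\in T^c$ with $d_T(u)=k$, put $X_u=\{u\}\cup N_T(u)$. The computation in Observation~\ref{edge expansion} gives $|\partial X_u|=2k+d_{T^c}(u)$, so $|\partial X_u|/|X_u|\le 2$ iff $d_{T^c}(u)\le 2$. For a degree-$4$ vertex this holds iff $k\ge 2$. Since $e(G[T^c])$ is small, most vertices of $T^c$ satisfy this; by Theorem~\ref{Liu}, any two of them that are nonadjacent must then share a $T$-neighbour.

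\textbf{Step 2.} Imitate Theorem~\ref{not-independent}. Take a degree-$4$ vertex $u$ with $d_T(u)=k\ge 2$, let $X=X_u$, let $Q=N(X)\setminus X$ (so $|Q|\le 2k+(4-k)=k+4$), and let $Y=V\setminus(X\cup Q)$. The same degree-deficit count expresses $2|Y|-|\partial Y|$ in terms of $n$, $|Q|$, $|Q\cap T|$, $e(Q)$ and the excess. Since $e(X,Y)=0$, showing $2|Y|\ge|\partial Y|$ yields the contradiction.

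For small $n$ this inequality need not hold for a single $X_u$. In that case I will enlarge $X$ to a union $X_{u_1}\cup X_{u_2}$ of two such sets that share $T$-neighbours; sharing $T$-neighbours is exactly what Step~1 forces. This keeps the boundary ratio at most $2$ while $Y$ still has many vertices of small boundary. The pair-counting of Theorem~\ref{deg5_vertices}, namely $\binom{s}{2}\le e(G[S])+3t$, then rules out the configurations where too many vertices of $S$ exist relative to $t$.

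\textbf{Step 3.} Handle what survives. The residual configurations are near the extreme case $n=14$, $t=8$, where $T^c$ is an independent set of six degree-$4$ vertices, or where the degree-$4$ vertices have $d_T(u)\le 1$. For these I will apply Theorem~\ref{part thm} with an explicit $3$-partition: for instance, two sets $X_{u_1}, X_{u_2}$ with few edges between them and the rest as $V_3$. Using Remark~\ref{poly_trick}, it then suffices to check $p(2)\le 0$, which is a finite numerical check in the values $t_i=e(V_i,V_j)$. As a fallback, I will insert a trial vector that is constant on the relevant blocks into Theorem~\ref{var_char_2}.

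\textbf{Main obstacle.} I expect Step~3 to be the hardest part. The clean Liu-type counting that works for large $n$ degenerates when $|Y|$ is tiny, so the residual near-bipartite configurations (a $3$-regular side $T$ against a mostly $4$-regular side $T^c$) must be handled by a carefully chosen partition or trial vector for each remaining $(n,t)$.
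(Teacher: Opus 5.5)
Your proposal has a genuine gap: it is a plan rather than a proof. The decisive step is deferred to ``a carefully chosen partition or trial vector for each remaining $(n,t)$''. Worse, the tools you name for Steps~2--3 fail on exactly the extreme configuration you single out.

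Take $n=14$, $t=8$. Then $e(G[T^c])=0$, so $G$ is bipartite between eight degree-$3$ vertices and six degree-$4$ vertices.
\begin{itemize}
\item \textbf{Step~2.} Independence of $T$ gives $Q\subseteq T^c$, and the deficit count yields only $2|Y|-|\partial Y|\ge 2n-t-7k-14+2e(Q)$. This is not guaranteed to be nonnegative for $n\le 19$, which is precisely why Lemma~\ref{large-T-degree} needs $n\ge 20$. It genuinely fails: if $Q=T^c\setminus\{u\}$, then $Y=T\setminus N_T(u)$ has $|\partial Y|/|Y|=3$.
\item \textbf{Pair counting.} The count $\binom{s}{2}\le e(G[S])+3t$ becomes $15\le 24$, so there is no contradiction.
\item \textbf{Your Step~3 partition.} The partition ``$X_{u_1}$, $X_{u_2}$, rest'' is not available. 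Here $T^c$ is independent, and your Step~1 forces every two $u_1,u_2\in T^c$ to share a $T$-neighbour, so $X_{u_1}\cap X_{u_2}\neq\emptyset$.
\item \textbf{The natural $3$-partition.} Take $V_1=\{v\}\cup N_T(v)$, $V_2=T^c\setminus\{v\}$, $V_3=T\setminus N_T(v)$. Then $t_1=8$, $t_2=0$, $t_3=12$, so $p(2)=400-1720+1344=24>0$, and Theorem~\ref{part thm} gives only about $2.05$. That bound is already the optimum over vectors constant on the three parts, so no choice of values rescues it. Merging any other two of the four classes $\{v\}$, $N_T(v)$, $T\setminus N_T(v)$, $T^c\setminus\{v\}$ does even worse.
\end{itemize}

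Two ingredients are missing.

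First, a counting step showing that your ``$d_T(u)\le 1$'' residue never occurs. Suppose every degree-$4$ vertex had $d_T(u)\le 2$. Then $3t=\sum_{u\in T^c}d_T(u)\le 2|T^c\setminus H|+\sum_{u\in H}d(u)\le 2|T^c|+3\sum_{u\in H}(d(u)-4)=2|T^c|+3t-24$. This gives $|T^c|\ge 12$, contradicting $|T^c|=n-t\le 11$. Hence some $v$ has $d(v)=4$ and $k=d_T(v)\in\{3,4\}$.

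Second, a \emph{four}-valued test vector in Theorem~\ref{var_char_2}: set $x=4$ on $v$, $3$ on $N_T(v)$, $-1$ on $T\setminus N_T(v)$, and $0$ on $T^c\setminus\{v\}$.
\begin{itemize}
\item Edges inside $T^c\setminus\{v\}$ cost nothing, so $\sum_{E}(x_a-x_b)^2=3t+64$ regardless of the finer structure.
\item The quantity $2\sum_{\{a,b\}}(x_a-x_b)^2-n\sum_{E}(x_a-x_b)^2$ equals $n(32-t)-2(t-20)^2>0$ for $k=4$.
\item For $k=3$ it equals $(16-t)(n+2t-32)\ge 0$ when $n\ge 16$.
\item For $n=14,15$ one has $t=8$, every vertex of $T^c$ has degree $4$, and $e(G[T^c])\le 2$ leaves a vertex with $d_T(v)=4$.
\end{itemize}
Giving the hub $v$ its own value is what brings the quotient below $2$. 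For $n=14$, $t=8$ it is $1232/640\approx 1.93$, whereas the three-part vectors above stay above $2$.
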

	\begin{proof}
		We prove that there is at least one vertex of degree $4$ in $T^c$ that has at least $3$ neighbors in $T$. Suppose on the contrary that every vertex of degree $4$ has at most $2$ neighbors in $T$. Since $T$ is independent, we can count the number of edges from $T$ to $T^c$ as follows:
		\begin{align*}
			3t &=\sum \limits_{u \in T^c}d_T(u)\\
			&\leq 2\vert T^c \setminus H \vert + \sum \limits_{u \in H}d_T(u)\\
			&\leq 2\vert T^c \vert - 2\vert H \vert + \sum \limits_{u \in H}d(u)\\
			&= 2\vert T^c \vert + 2\vert H \vert + \sum \limits_{u \in H}(d(u)-4)\\
			&\leq 2\vert T^c \vert + 2\sum \limits_{u \in H}(d(u)-4) + \sum \limits_{u \in H}(d(u)-4)\\
			&= 2\vert T^c \vert +3\sum \limits_{u \in T^c}(d(u)-4)\\
			&= 2\vert T^c \vert +3t -24. \quad \text{(By Observation \ref{excess degree})}
		\end{align*}
		This gives, $\vert T^c \vert \geq 12$. But $\vert T^c \vert =n-t \leq 19-8=11$, which is a contradiction. Thus there is a vertex in $G$ of degree $4$ adjacent to at least $3$ vertices in $T$. Let  $v$ be a vertex in $T^c$ such that $d(v) = 4$ and $d_T(v) \in \{3,4\}$. 
		Define $A= N_T(v), \quad B=T \setminus A, \quad  C= \{v\}, \quad D= T^c \setminus \{v\}$. Set $k=d_T(v)$. 
		We define a vector $x\in \mathbb{R}^n$ as follows: 
		$$
		x_w =
		\begin{cases}
			3, & \text{if } w \in A,\\
			-1, & \text{if } w \in B,\\
			4, &\text{if } w\in C, \\
			0, & \text{if } w\in D. \\
		\end{cases}
		$$
		We use this trial vector in  Theorem \ref{var_char_2} to complete the proof of this case.
		
		Note that $e(A,B)=0,\quad  e(A,C)=k,\quad e(A,D)=2k, \quad e(B,C)=0, \quad e(B,D)=3(t - k), \quad e(C,D) = 4-k$. Therefore 
		\begin{align*}
			\sum \limits_{ab \in E(G)}(x_a-x_b)^2&=  k(3-4)^2+2k (3-0)^2+3(t - k)(-1-0)^2+(4-k)(4-0)^2\\
			&=k+18 k+3t -3 k+64-16k \\
			&=3 t + 64.
		\end{align*}
		On the other hand,
		\begin{align*}
			&\sum \limits_{\{a,b\}\subseteq V(G) }(x_a-x_b)^2\\&=16k(t -k)+k+9k(\vert T^c \vert -1)+25(t - k)+(t -k)(\vert T^c \vert -1)+16(\vert T^c \vert -1)\\
			&=16kt -16k^2+k+9k\vert T^c \vert -9k+25 t -25k+t \vert T^c \vert - t -k \vert T^c \vert +k+16\vert T^c \vert -16\\
			&=8kt -16k^2-32k+8kn+8 t -16+n t +16n-t^2.
		\end{align*}
		\textbf{Case 1:  } Now let us consider the case $k=4$. Consider the quantity,
		\begin{align*}
			2\sum \limits_{\{a,b\}\subseteq V(G) }(x_a-x_b)^2-n \sum \limits_{ab \in E(G)}(x_a-x_b)^2&=
			2n t +96n-2t^2+80 t -800 -n(3t + 64)\\
			&=n (32-t) -2(t^2-40 t +400)\\
			&=n (32-t) -2(t-20)^2. \\
		\end{align*}
		We have $14 \leq n \leq 19$. Also, by equation \eqref{size of T}, $$8 \leq t \leq \bigg \lfloor \dfrac{2n-4}{3} \bigg \rfloor \leq 11.$$
		Therefore,
		$$n(32-t ) \geq 14\times 21=294.$$
		$$2(20-t)^2 \leq 2 \times 12^2=288. $$
		Hence, $$2\sum \limits_{\{a,b\}\subseteq V(G) }(x_a-x_b)^2-n \sum \limits_{ab \in E(G)}(x_a-x_b)^2 \geq 6> 0.$$
		
		Thus, we have $$\dfrac{n \sum \limits_{ab \in E(G)}(x_a-x_b)^2}{\sum \limits_{\{a,b\}\subseteq V(G) }(x_a-x_b)^2} < 2.$$
		By Theorem \ref{var_char_2}, we get that $a(G) < 2$.
		
		Note that, for $n=14$, equation \eqref{size of T} gives $t \leq 8$. By Observation \ref{t_geq_8},  $t \geq8$. That is,  $t = 8$. Thus, by equation \eqref{size of T}, we get $e(G[T^c]) =0$. By Observation \ref{excess degree}, every vertex in $T^c$ must have degree $4$. Thus $d_T(v)=4$. Similarly for $n = 15$, we get $t = 8, e(G[T^c]) =2$ and hence in this case $v$ can be chosen among an isolated vertex in $G[T^c]$ with $d_T(v)=4$.
		
		Thus for cases $n=14$ and $n =15$, there exists a vertex $v$ with $d_T(v)=4$, and hence the proof is done. Hence, for the remaining case it is enough to consider $n \geq 16$.
		
		\textbf{Case 2: } Now let us look at the case $k=3$. Consider the quantity,
		\begin{align*}
			2\sum \limits_{\{a,b\}\subseteq V(G) }(x_a-x_b)^2-n \sum \limits_{ab \in E(G)}(x_a-x_b)^2&=
			2n t +80n-2t^2+64 t -512 -n(3t + 64)\\
			&=16n+64t -512 -nt -2t^2 \\
			&=(16-t)(n+2t -32).
		\end{align*}
		
		We have $16-t > 0 \text{ and } n+2 t -32 \geq 16+16-32 =0$. Thus $$2\sum \limits_{\{a,b\}\subseteq V(G) }(x_a-x_b)^2-n \sum \limits_{ab \in E(G)}(x_a-x_b)^2 \geq 0.$$

		Hence, we have $$\dfrac{n \sum \limits_{ab \in E(G)}(x_a-x_b)^2}{\sum \limits_{\{a,b\}\subseteq V(G) }(x_a-x_b)^2} \leq 2.$$
		By Theorem \ref{var_char_2}, we get that $a(G) \leq 2$.
	\end{proof}
	We now prove that if there is a degree four vertex that is adjacent to at least two vertices of degree three, the conjecture is true.
	\begin{lemma}\label{large-T-degree}
		Let $n \geq 20$ and $T$ be independent. If there exists a degree $4$ vertex $u \in T^c$ such that $d_T(u)\geq 2$, then $a(G) \leq 2$.
	\end{lemma}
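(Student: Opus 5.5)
The plan is to mimic the proof of Theorem~\ref{not-independent}, replacing the edge $x_1x_2$ by a small set built around $u$. Let $k=d_T(u)\in\{2,3,4\}$ and put $X=\{u\}\cup N_T(u)$. Since $T$ is independent, every vertex of $N_T(u)$ has exactly two neighbours outside $X$. Also, $u$ has $4-k$ neighbours outside $X$. Hence
\[
|\partial X|=2k+(4-k)=k+4\le 2(k+1)=2|X|
\]
exactly when $k\ge 2$, so $|\partial X|/|X|\le 2$. This is where the hypothesis $d_T(u)\ge 2$ enters.

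Next let $Q=N(X)\setminus X$ and $Y=V(G)\setminus(X\cup Q)$, so $e(X,Y)=0$ and $|Q|\le k+4$. Assuming $a(G)>2$, Theorem~\ref{Liu} applied to $X$ and $Y$ gives a contradiction as soon as $|\partial Y|\le 2|Y|$. So the whole task is to prove that inequality.

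\textbf{Bounding $|\partial Y|$.} As in Theorem~\ref{not-independent}, I would write
\[
|\partial Y|=e(Q,Y)=\sum_{w\in Q}d(w)-2e(Q)-e(Q,X), \qquad e(Q,X)=k+4.
\]
I would then bound the degree sum by $4|Q|-|Q\cap T|$ plus the total excess, using Observation~\ref{excess degree} for the excess. A key simplification is that the neighbours of vertices of $N_T(u)$ lie in $T^c$, since $T$ is independent. This gives
\[
2|Y|-|\partial Y|\ \ge\ 2n-6|Q|-k+10-t+|Q\cap T|+2e(Q).
\]
I would then use $t\le (2n-4-e(T^c))/3$ from \eqref{size of T}. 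For the excess I would use the sharper quantity $\sum_{w\in Q\cap T^c}(d(w)-4)$ rather than the full $t-8$. The point is that vertices of $T^c$ outside $Q$ also absorb excess, and $e(T^c)=2n-4-3t$ couples $t$ with the structure of $T^c$.

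\textbf{Main obstacle.} The crude count only gives $|\partial Y|\le 2|Y|$ for roughly $n\ge 31$, which is far from $n\ge 20$. The hard step is therefore to sharpen the estimate. Overlaps among the $2k$ outside-neighbours of $N_T(u)$ shrink $|Q|$, and edges inside $Q$ contribute $+2e(Q)$. If these still do not suffice, I would fall back on an explicit trial vector in Theorem~\ref{var_char_2}, generalising the one in Lemma~\ref{small vertices}:
\begin{itemize}[label={}]
\item constant on $A=N_T(u)$, on $B=T\setminus A$, on $C=\{u\}$ and on $D=T^c\setminus\{u\}$;
\item possibly with a fifth class for the outside-neighbours of $A$;
\item with the constants tuned to $k$.
\end{itemize}
One then computes the numerator and the all-pairs denominator as polynomials in $n,t,k$ and shows $2\sum(x_a-x_b)^2-n\sum_{ab\in E}(x_a-x_b)^2\ge 0$ for $20\le n$, using $8\le t\le (2n-4)/3$. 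I would split into the cases $k=2,3,4$. I expect $k=2$ to be tightest, since there $|\partial X|/|X|=2$ exactly.
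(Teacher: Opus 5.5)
There is a genuine gap, and it comes from your choice $X=\{u\}\cup N_T(u)$. For $k=d_T(u)\ge 3$ this only gives $|Q|\le k+4$. Your own inequality then yields just $2|Y|-|\partial Y|\ge 2n-t-7k-14$. Combined with $t\le (2n-4)/3$, this guarantees $|\partial Y|\le 2|Y|$ only for $n\ge 26$ when $k=3$ and $n\ge 31$ when $k=4$. You notice this, but the proposal stops there. You give no argument that overlaps in $Q$, edges inside $Q$, or localising the excess degree to $Q\cap T^c$ help in the worst case ($|Q|=k+4$, $e(Q)=0$, all excess inside $Q$). The trial-vector fallback is also left unspecified (a possible fifth class, constants to be tuned). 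So the cases $k\in\{3,4\}$ with $20\le n\le 30$ are not proved. Your expectation that $k=2$ is the tightest case is backwards: in your count, $k=2$ is the one case that already works, since then $2|Y|-|\partial Y|\ge 2n-t-28$.

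The missing idea is that you do not need all of $N_T(u)$. Take $X=\{u,u_1,u_2\}$ for any two $T$-neighbours $u_1,u_2$ of $u$. Since $T$ is independent, $G[X]$ has exactly two edges. Hence $|\partial X|=10-4=6=2|X|$ and $|Q|\le 6$, whatever the value of $d_T(u)$. Any further $T$-neighbours of $u$ simply land in $Q\cap T$, which only helps. The identical computation gives
\[
2|Y|-|\partial Y|\ \ge\ 2n-6|Q|+8-t+|Q\cap T|+2e(Q)\ \ge\ 2n-t-28\ \ge\ \frac{4n-80}{3}\ \ge\ 0
\]
for $n\ge 20$, and Theorem~\ref{Liu} finishes as you describe. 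This is exactly the paper's proof.

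Your fallback could in fact have been completed. The algebra for the trial vector of Lemma~\ref{small vertices} never uses $n\le 19$. For $k=3$ it gives $(16-t)(n+2t-32)>0$ when $t\le 15$. For $k=4$ it gives $n(32-t)-2(t-20)^2>0$ when $t\le 18$. These ranges cover exactly where your count fails. As written, however, this was neither stated nor checked.
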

	\begin{proof}
		Let $u \in T^c$ with $d_T(u) \geq 2$. Let $u_1, u_2 \in T$ be distinct neighbors of $u$. Define $X =\{u,u_1,u_2\}$. Since $d(u)=4, d(u_1)=d(u_2)=3$ and $G[X]$ has exactly two edges, we get that $\vert \partial X \vert = 10-2(2)=6=2\vert X \vert$. Next define $Q=N(X)\setminus X$ and $Y=V(G) \setminus \{X\cup Q\}$. Again using the degree argument, is clear that $\vert Q \vert \leq 6$. Now consider,\\
		\begin{align*}
			\vert \partial Y \vert &=e(Q,Y) \leq \sum\limits_{v \in Q}d(v) - \vert \partial X\vert \\
			&= \sum\limits_{v \in Q\cap T}d(v) + \sum\limits_{v \in Q \cap T^c}d(v) -6\\
			&=3\vert T \cap Q \vert + \sum\limits_{v \in Q \cap T^c}d(v) -6\\
			&=3\vert T \cap Q \vert + \sum\limits_{v \in Q \cap T^c}(d(v)-4) + 4\vert Q \cap T^c \vert  -6\\
			&\leq 3 \vert T \cap Q \vert + t  -8 + 4(\vert Q \vert - \vert T \cap Q))-6 \quad \quad  \text{(Using Observation \ref{excess degree})}\\ 
			&=t -14-\vert T \cap Q \vert +4 \vert Q \vert.
		\end{align*}
		Since $\vert Q \vert \leq 6$, we get,\\
		\begin{align*}
			2\vert Y \vert - \vert \partial Y \vert &\geq 2(n-3-\vert Q\vert)-t+14+\vert T \cap Q \vert -4\vert Q \vert\\
			&=2n-t-6\vert Q \vert +8 + \vert T \cap Q \vert\\
			&\geq 2n-t-6\vert Q \vert+8\\
			&\geq 2n-(t+28).
		\end{align*}
		Note that, since $T$ is independent, using \eqref{size of T} we have that,  $t \leq \dfrac{2n-4}{3}$. Therefore, for $n \geq 20$,
		\begin{equation*}
			t+28 \leq \dfrac{2n-4}{3}+28 \leq 2n.
		\end{equation*}
		Hence we get that, $2 \vert Y \vert -\vert \partial Y \vert \geq 0$, that is, $\dfrac{\vert \partial Y \
			\vert}{\vert Y \vert }\leq 2$.
		By contradiction, assume that $a(G) > 2$.
		Since $\frac{\vert \partial Y \vert}{\vert Y \vert}\leq 2$ and $\frac{\vert \partial X \vert}{\vert X \vert}= 2$, by using Theorem \ref{Liu} for the sets $X$ and $Y$, we have $e(X,Y)^2 > 0 $,  which is a contradiction. Hence, $a(G) \leq 2$.
	\end{proof}
	Hereafter, we can assume that any degree $4$ vertex in $T^c$ has at most one neighbor in $T$. Recall that $H= \{ u \in T^c: d(u) \geq 5\}$. Using Observation \ref{excess degree}, we get the following
	\begin{align*}
		3t &= \sum \limits_{u \in T^c}d_T(u)\\
		&= \sum \limits_{u \in T^c \setminus H}d_T(u) + \sum \limits_{u \in H}d_T(u)\\
		&\leq \vert T^c \setminus H \vert + \sum \limits_{u \in H}d_T(u)\\
		&\leq \vert T ^c \vert -\vert H \vert + \sum \limits_{u \in H}d(u) \\
		&= \vert T ^c \vert -\vert H \vert + \sum \limits_{u \in H}(d(u)-4) +4 \vert H \vert\\
		&\leq \vert T ^c \vert+ \sum \limits_{u \in H}(d(u)-4)+3\sum \limits_{u \in H}(d(u)-4)\\
		&\leq \vert T ^c \vert+ 4\sum \limits_{u \in T^c}(d(u)-4)\\
		&=\vert T^c \vert +4(t -8)\\
		&= n-t+4t-32 \\
		0 &\leq n-32 \\
		32&\leq n.
	\end{align*}
	Thus, $n \leq 31$ is not possible. Hence, it is enough to prove the conjecture for $n \geq 32$.    
	
	Define $R:= \{ u\in T^c : d(u)=4 \text{ and } d_T(u) =1\}$.
	\begin{lemma}\label{adjacent-vertex-in-R}
		Let $G$ be a graph on $n \geq 32$ vertices. Let $T$ be independent. If there exist vertices $u, v \in R$ such that $u\sim v$, then $a(G) \leq 2$.
	\end{lemma}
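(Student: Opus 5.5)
The plan is to repeat the argument of Lemma~\ref{large-T-degree}. We choose a small set $X$ built around the edge $uv$ with $|\partial X|/|X|\le 2$. We then take $Q=N(X)\setminus X$ and $Y=V(G)\setminus(X\cup Q)$, so that $e(X,Y)=0$. Finally we show $|\partial Y|\le 2|Y|$ and apply Theorem~\ref{Liu} to get a contradiction with $a(G)>2$.

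\textbf{Choice of $X$.} Since $u,v\in R$, each has degree $4$ and exactly one neighbour in $T$; call these $u'\in N_T(u)$ and $v'\in N_T(v)$.
\begin{itemize}
\item If $u'\neq v'$, set $X=\{u,v,u',v'\}$. Because $T$ is independent, the edges of $G[X]$ are exactly $uv$, $uu'$, $vv'$. Hence $|\partial X|=(4+4+3+3)-2\cdot 3=8=2|X|$.
\item If $u'=v'$, set $X=\{u,v,u'\}$. This is a triangle, so $|\partial X|=11-6=5<2|X|$.
\end{itemize}
In both cases $|X|\le 4$, $|\partial X|/|X|\le 2$, and $|Q|\le|\partial X|\le 8$.

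\textbf{Bounding $|\partial Y|$.} As in Lemma~\ref{large-T-degree}, every edge leaving $Y$ goes to $Q$, and every vertex of $Q$ sends at least one edge into $X$. Therefore
\[
|\partial Y|=e(Q,Y)\le \sum_{w\in Q}d(w)-|\partial X| .
\]
We split $\sum_{w\in Q} d(w)$ into $3|Q\cap T|$ plus $4|Q\cap T^c|$ plus the excess degrees of $Q\cap T^c$. By Observation~\ref{excess degree}, the excess degrees total at most $t-8$.

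In the first case ($|\partial X|=8$, $|X|=4$) this gives $|\partial Y|\le 4|Q|+t-16$, and hence
\[
2|Y|-|\partial Y|\ge 2(n-4-|Q|)-4|Q|-t+16 = 2n-t+8-6|Q| \ge 2n-t-40 .
\]
In the second case ($|\partial X|=5$, $|X|=3$, $|Q|\le 5$) the same computation gives an even larger lower bound.

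\textbf{Closing the argument.} Since $T$ is independent, \eqref{size of T} gives $t\le (2n-4)/3$. Then
\[
2n-t-40\ \ge\ \frac{4n+4}{3}-40\ \ge\ 0 \quad\text{whenever } n\ge 29,
\]
which holds because $n\ge 32$. So $|\partial Y|/|Y|\le 2$.

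Now suppose $a(G)>2$. Then $a(G)>\max\{|\partial X|/|X|,\ |\partial Y|/|Y|\}$. Theorem~\ref{Liu} forces $e(X,Y)>0$, contradicting $e(X,Y)=0$ by construction. Hence $a(G)\le 2$.

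\textbf{Expected obstacles.}
\begin{itemize}
\item $Y$ must be nonempty so that Theorem~\ref{Liu} applies. This holds because $|X\cup Q|\le 12<n$.
\item The case $u'=v'$ must be handled separately, as above, since there $X$ is a triangle rather than a path.
\item The main point to verify carefully is the counting bound $|\partial Y|\le\sum_{Q}d-|\partial X|$ together with the excess-degree estimate.
\end{itemize}
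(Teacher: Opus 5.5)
Your proof is correct. In the case $u'\neq v'$ it is the paper's argument: the same $X=\{u,v,u',v'\}$ and the same bound $2|Y|-|\partial Y|\ge 2n-t-40$.

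You differ from the paper in the case $u'=v'$. There the paper does not use Theorem~\ref{Liu}. It applies the three-part partition bound of Theorem~\ref{part thm} with $V_1=\{w\}$, $V_2=\{u,v\}$ and $V_3$ the remaining vertices, so that $t_1=2$, $t_2=1$, $t_3=4$. It then checks $p(2)=28-2n<0$ and concludes $a(G)<2$ by Remark~\ref{poly_trick}.

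Your triangle $X=\{u,v,u'\}$ also works. It has $|\partial X|=5$ and $|Q|\le 5$, which gives $2|Y|-|\partial Y|\ge 2n-t-23>0$, so Theorem~\ref{Liu} applies.

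What each route buys:
\begin{itemize}
\item Yours is uniform: one tool handles both cases, and the partition theorem is not needed for this lemma.
\item The paper's gives the strict conclusion $a(G)<2$ in that subcase by a one-line computation, with no neighbourhood bookkeeping.
\item Your argument also yields strictness there if you want it. Since $|\partial X|/|X|=5/3<2$ and $|\partial Y|/|Y|<2$, even $a(G)=2$ would force $e(X,Y)>0$.
\end{itemize}

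One wording point. The inequality $|\partial Y|\le\sum_{w\in Q}d(w)-|\partial X|$ holds because every edge of $\partial X$ has its other endpoint in $Q$. Hence $e(Q,X)=|\partial X|$ and $\sum_{w\in Q}d(w)=|\partial X|+e(Q,Y)+2e(Q)$. The fact that each vertex of $Q$ has a neighbour in $X$ is not what is being used.
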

	\begin{proof}
		
		\textbf{Case $1$:} The vertices $u$ and $v$ are adjacent, and  have a common neighbor $w \in T$.
		Define $V_1=\{w\}, V_2=\{u,v\}, V_3=V(G) \setminus (V_1 \cup V_2)$. Note that, $\vert V_i \vert \leq n-2$ for $i=1,2,3$. 
		Then using the notations in Theorem \ref{part thm}, we have $t_1=e(V_1,V_2)=2, t_2=e(V_1,V_3)=1, t_3=e(V_2,V_3)=4$. Thus, we get $\alpha=2(n-3)$,
		$$
		\beta=(2+1).2.(n-3)+(1+4).1.2+(2+4).1.(n-3)=12(n-3)+10=12n-26,
		$$
		and
		$$
		\gamma=n(2.1+1.4+4.2)=14n.
		$$
		Define $p(x):= \alpha x^2-\beta x+\gamma$. Note that, since $n \geq 32$, we have $p(2)=4\alpha-2\beta+\gamma=8(n-3)-2(12n-26)+14n=-2n+28< 0$. Thus, by Remark \ref{poly_trick}, we have that $a(G) < 2$.
		
		\textbf{Case 2:} Any two adjacent vertices of $R$ do not have a common neighbor in $T$. Let $u$ and $v$ be in $R$ such that $u \sim u^{'}, v \sim v^{'}, u^{'} \neq v^{'}$ where $u^{'}, v^{'} \in T$.
		
		Define $X=\{u,v,u^{'},v^{'}\},\quad Q= N(X)\setminus X, \quad  Y=V(G) \setminus \{X\cup Q\}$.
		Since $d(u)=d(v)=4, d(u^{'})=d(v^{'})=3$ and $G[X]$ has exactly three edges, we must have $\vert \partial X \vert =14-2(3)=8$ and $\vert Q \vert \leq 8$. Note that since $u, v \in R$, $T \cap Q =\phi$. Now consider,
		\begin{align*}
			\vert \partial Y \vert &=e(Q,Y) \leq \sum\limits_{k \in Q}d(k) - \vert \partial X\vert \\
			&= \sum\limits_{k \in Q }d(k) -8\\
			&= \sum\limits_{k \in Q }(d(k)-4) + 4\vert Q  \vert  -8\\
			&\leq \sum\limits_{k \in T^c }(d(k)-4) + 4\vert Q  \vert  -8\\
			&=t -8 + 4\vert Q \vert-8 \quad \quad  \text{(Using Observation \ref{excess degree}).}\\ 
			&=t-16+4 \vert Q \vert.
		\end{align*}
		Since $\vert Q \vert \leq 8$, we get,\\
		\begin{align*}
			2\vert Y \vert - \vert \partial Y \vert &\geq 2(n-4-\vert Q\vert)-t+16 -4\vert Q \vert\\
			&=2n-t-6\vert Q \vert +8 \\
			&\geq 2n-(t+40).
		\end{align*}
		Note that, since $T$ is independent, using \eqref{size of T} we have that,  $t \leq \dfrac{2n-4}{3}$. Therefore, for $n \geq 32$,
		\begin{equation*}
			t+40 \leq \dfrac{2n-4}{3}+40 < 2n.
		\end{equation*}
		Hence we get, $\dfrac{\vert \partial Y \
			\vert}{\vert Y \vert }< 2$.
		By contradiction, assume that $a(G) > 2$.
		Since $\frac{\vert \partial Y \vert}{\vert Y \vert}< 2$ and $\frac{\vert \partial X \vert}{\vert X \vert}= 2$, by using Theorem \ref{Liu} for the sets $X$ and $Y$, we have $e(X,Y)^2 > 0 $, which is a contradiction. Hence, $a(G) \leq 2$.
	\end{proof}
	
	Therefore, from now on, we can assume that $R$ is either empty or forms an independent set. For $x \in T$, define $r_x= \vert  N_R(x)\vert $. Since $d(x) =3$ and $T$ is independent, we must have $r_x \leq 3$ for all $x \in T$. We next prove that if this maximum degree is attained in $R$ then the conjecture is true.
	\begin{lemma}\label{large-R-degree}
		Let $G$ be a graph on $n \geq 32$ vertices. Let $T$ and $R$ form an independent set. If there exists a vertex $x \in T$ such that $r_x=3$, then $a(G) < 2$.
	\end{lemma}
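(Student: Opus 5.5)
The plan is to apply the three-part partition bound of Theorem~\ref{part thm} with a single trial partition and finish with Remark~\ref{poly_trick}, as in Case~1 of Lemma~\ref{adjacent-vertex-in-R}.

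\textbf{Setup.} Let $x\in T$ with $r_x=3$. Since $d(x)=3$, all three neighbours $y_1,y_2,y_3$ of $x$ lie in $R$. Each $y_i$ therefore has degree $4$ and exactly one neighbour in $T$, namely $x$. Because $R$ is independent, the $y_i$ are pairwise non-adjacent. Hence each $y_i$ has exactly three further neighbours, all outside $\{x,y_1,y_2,y_3\}$.

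\textbf{The partition.} Take
\[
V_1=\{x\},\qquad V_2=\{y_1,y_2,y_3\},\qquad V_3=V(G)\setminus(V_1\cup V_2).
\]
Then $n_1=1$, $n_2=3$ and $n_3=n-4$, all at most $n-2$, so the partition is admissible ($G$ is connected by our standing assumption). The edge counts are
\[
t_1=e(V_1,V_2)=3,\qquad t_2=e(V_1,V_3)=0,\qquad t_3=e(V_2,V_3)=9.
\]
Substituting into Theorem~\ref{part thm} gives
\[
\alpha=3(n-4),\qquad \beta=9(n-4)+27+12(n-4)=21(n-4)+27,\qquad \gamma=n(t_1t_2+t_2t_3+t_1t_3)=27n.
\]

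\textbf{Evaluating the polynomial.} With $p(z)=\alpha z^2-\beta z+\gamma$,
\[
p(2)=12(n-4)-42(n-4)-54+27n=66-3n.
\]
This is negative for $n\ge 23$, and in particular for $n\ge 32$. By Remark~\ref{poly_trick}, $a(G)<2$, which is the claimed strict inequality.

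\textbf{Main obstacle.} The only delicate point is the edge bookkeeping for $t_2$ and $t_3$. Getting $t_2=0$ uses that all of $x$'s neighbours are in $R$. Getting $t_3=9$ uses two facts: each $y_i$ has exactly one $T$-neighbour, and $R$ is independent, so no edges run among the $y_i$. Both facts come directly from the definition of $R$ and the hypotheses of the lemma.

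If that bookkeeping somehow failed, the fallback would be a trial vector in Theorem~\ref{var_char_2} that is constant on $\{x\}$, on $\{y_1,y_2,y_3\}$ and on the rest. This is just the variational content behind Theorem~\ref{part thm}.
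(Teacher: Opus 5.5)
Your proof is correct, but it uses a different tool than the paper. Your counts $t_1=3$, $t_2=0$, $t_3=9$ are right. The resulting $\alpha=3(n-4)$, $\beta=21(n-4)+27$, $\gamma=27n$ check out, and $p(2)=66-3n<0$ gives $a(G)<2$ by Remark~\ref{poly_trick}.

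The paper does not invoke Theorem~\ref{part thm} here. Instead it plugs an explicit vector $z$ into the Rayleigh quotient, with $z_x=3$, $z_w=1$ on $N(x)$, and $z_w=-6/N$ elsewhere, where $N=n-4$. This gives $a(G)\le 2-\frac{(N-18)(N+6)}{4N(N+3)}$.

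That $z$ is constant on exactly your three parts. So your bound, which is the minimum over all vectors constant on $\{x\}$, $N(x)$ and the rest, is at least as strong as the paper's. Here the gain is only quantitative: both arguments give the same threshold $n\ge 23$. At $n=22$, $2$ is the smallest root of your $p$ and also the value of the paper's quotient.

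Your route is shorter and parallels Case~1 of Lemma~\ref{adjacent-vertex-in-R}. It reduces everything to evaluating one quadratic at $2$, with no weights to choose. The paper's route is self-contained: it needs only the basic variational characterization rather than the cited partition theorem, and it gives an explicit closed-form bound. Your stated fallback is, in effect, the paper's proof.
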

	\begin{proof}
		Let $N=n-4$. Define the vector $z\in \mathbb{R}^n$ as follows: 
		$$
		z_w =
		\begin{cases}
			3, & \text{if } w = x,\\
			1, & \text{if } w \in N(x),\\
			\frac{-6}{N}, & \text{otherwise.}
		\end{cases}
		$$
		Then $z \perp \mathbf{1}$.
		There are three edges from $x$ to $N(x)$, and since $R$ is independent, there are nine edges from $N(x)$ to the remaining $N$ vertices. Therefore,
		\begin{align*}
			z^TL(G)z&=\sum_{\{u,v\}\in E}(z_u-z_v)^2\\
			&=3(4)+9\bigg(1+\frac{6}{N}\bigg)^2 \\
			&=12 +9\bigg(1+\frac{36}{N^2}+\frac{12}{N}\bigg)\\
			&=12 +9+\frac{324}{N^2}+\frac{108}{N}\\
			&=\dfrac{21N^2+108N+324}{N^2}.
		\end{align*}
		And,
		\begin{align*}
			z^Tz=9+3+\frac{36}{N}=12+\frac{36}{N}.
		\end{align*}
		Therefore, we get,\\
		\begin{align*}
			a(G) &\leq \dfrac{z^TL(G)z}{z^Tz}\\
			&=\dfrac
			{21N^2+108N+324}{N^2(12+\frac{36}{N})}\\
			&=\dfrac
			{21N^2+108N+324}{12N(N+3)}\\
			&=\dfrac{7N^2+36N+108}{4N(N+3)}\\
			&=\dfrac{8N^2+24N+12N-N^2+108}{4N(N+3)}\\
			&=2-\dfrac{N^2-12N-108}{4N(N+3)}\\
			&=2-\dfrac{(N-18)(N+6)}{4N(N+3)}.
		\end{align*}
		Since $n \geq 32$, $N \geq 28$. Therefore $a(G) < 2$.
	\end{proof}
	
	We recall three known results which will be crucial in proving the remaining case.
	
	\begin{lemma}[{\cite{HJ12}}]\label{Schur_Comp}
		Let $Z$ be a real symmetric matrix partitioned as follows:
		\[
		Z=
		\begin{pmatrix}
			Z_{11} & Z_{12}\\
			Z_{12}^T & Z_{22}
		\end{pmatrix}.
		\]
		where $Z_{11}$ is nonsingular and let $$K=Z_{22}-Z_{12}^TZ_{11}^{-1}Z_{12}$$ be the Schur complement of $Z_{11}$ in $Z$. Then the matrix 
		$$
		\begin{pmatrix}
			Z_{11} & 0\\
			0 & K
		\end{pmatrix}.$$
		is congruent to $Z$.
	\end{lemma}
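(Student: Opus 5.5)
This is the standard Schur complement congruence, and I would prove it by exhibiting the congruence matrix explicitly. Since $Z_{11}$ is nonsingular, I set
\[
P=\begin{pmatrix} I & -Z_{11}^{-1}Z_{12}\\ 0 & I\end{pmatrix},
\]
with identity blocks of the sizes of $Z_{11}$ and $Z_{22}$. This matrix is block upper triangular with identity diagonal blocks, so $\det P=1$ and $P$ is invertible.

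The key step is to compute $P^TZP$. Because $Z$ is symmetric, $Z_{11}$ is symmetric, hence $(Z_{11}^{-1})^T=Z_{11}^{-1}$, and
\[
P^T=\begin{pmatrix} I & 0\\ -Z_{12}^TZ_{11}^{-1} & I\end{pmatrix}.
\]
I first compute
\[
ZP=\begin{pmatrix} Z_{11} & Z_{12}-Z_{11}Z_{11}^{-1}Z_{12}\\ Z_{12}^T & Z_{22}-Z_{12}^TZ_{11}^{-1}Z_{12}\end{pmatrix}=\begin{pmatrix} Z_{11} & 0\\ Z_{12}^T & K\end{pmatrix}.
\]
Left-multiplying by $P^T$ leaves the first block row unchanged. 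The second block row becomes $\bigl(-Z_{12}^TZ_{11}^{-1}Z_{11}+Z_{12}^T,\; K\bigr)=(0,K)$. Hence
\[
P^TZP=\begin{pmatrix} Z_{11} & 0\\ 0 & K\end{pmatrix},
\]
and since $P$ is invertible this is precisely the claimed congruence.

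There is no real obstacle here. The only points needing care are:
\begin{itemize}
\item the symmetry of $Z_{11}^{-1}$, which makes the lower-left block of $P^TZP$ vanish as well as the upper-right one;
\item keeping the block dimensions consistent, so that $Z_{12}$ is $p\times q$ when $Z_{11}$ is $p\times p$ and $Z_{22}$ is $q\times q$.
\end{itemize}

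In later use, the lemma will be combined with Sylvester's law of inertia to count the eigenvalues of a matrix such as $L(G)-2I$ lying below zero.
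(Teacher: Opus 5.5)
Your proof is correct: the explicit congruence $P^TZP=\begin{pmatrix} Z_{11} & 0\\ 0 & K\end{pmatrix}$, with $P$ block unit upper triangular, is the standard block factorization behind this result. The paper gives no proof of its own and simply cites Horn and Johnson, where the lemma rests on this same factorization, so your argument matches the intended one.
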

	\begin{lemma}[{\cite{HJ12}}]\label{Sylvester}
		Two real symmetric matrices are congruent if and only if they have the same inertia.  
	\end{lemma}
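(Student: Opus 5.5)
This is Sylvester's law of inertia, a classical statement cited from \cite{HJ12}. I would nonetheless prove it directly in two directions, using the spectral theorem and a variational description of inertia. Throughout, write the inertia of a real symmetric $n\times n$ matrix $Z$ as the triple $(p,q,r)$ counting its positive, negative and zero eigenvalues. Recall that $Z$ and $W$ are congruent if $W=P^TZP$ for some invertible $P$, and that congruence is an equivalence relation.

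\emph{Same inertia implies congruent.} By the spectral theorem, $Z=U\Lambda U^T$ with $U$ orthogonal and $\Lambda$ diagonal. Order the eigenvalues so the $p$ positive ones come first, then the $q$ negative ones, then the $r$ zeros. Let $D$ be the diagonal matrix with entries $|\lambda_i|^{-1/2}$ when $\lambda_i\neq 0$ and $1$ otherwise. Then $D^T U^T Z U D=\mathrm{diag}(I_p,-I_q,0_r)$, so $Z$ is congruent to this canonical form, which depends only on $(p,q,r)$. If $Z$ and $W$ have the same inertia, both are congruent to the same canonical matrix, and transitivity and symmetry of congruence give $Z\cong W$.

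\emph{Congruent implies same inertia.} First, $r$ is the nullity of $Z$, and $\operatorname{rank}(P^TZP)=\operatorname{rank}(Z)$ because $P$ is invertible, so $r$ is preserved. For $p$, I will use the characterization
\[
p(Z)=\max\{\dim V : x^TZx>0 \text{ for all } 0\neq x\in V\}.
\]
The eigenvectors of the positive eigenvalues span a $p$-dimensional space on which the form is positive, so the maximum is at least $p$. Conversely, any subspace $V$ of dimension larger than $p$ meets the $(n-p)$-dimensional span of the remaining eigenvectors nontrivially. At a nonzero vector $x$ in that intersection, $x^TZx\le 0$, so no such $V$ works.

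Now suppose $W=P^TZP$. If $x^TWx>0$ on $V\setminus\{0\}$, then $y^TZy>0$ on $PV\setminus\{0\}$, and $\dim PV=\dim V$. Hence $p(W)\le p(Z)$. Applying the same argument with $P^{-1}$ gives equality. Finally $q=n-p-r$ is preserved as well, or one can apply the argument to $-Z$.

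The only step with real content is the max-dimension characterization of $p$, whose key point is the dimension-counting intersection argument. Everything else is routine: the spectral theorem, rank invariance, and the fact that congruence is an equivalence relation.
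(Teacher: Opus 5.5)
Your proof is correct and complete. Diagonalizing orthogonally and rescaling by $|\lambda_i|^{-1/2}$ brings each matrix to $\mathrm{diag}(I_p,-I_q,0_r)$, and the maximal-positive-subspace characterization of $p$, via the dimension-counting intersection, shows that $p$ is a congruence invariant.

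The paper does not prove this lemma; it cites it from Horn and Johnson, whose standard argument is essentially the one you give. Only the direction ``congruent implies same inertia'' is actually used in the paper, to pass from the inertia of $\mathrm{diag}(I,M)$ to that of $L(G)-2I$.
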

	\begin{lemma}\cite{HJ12}\label{Cauchy-interlacing}
		Let $A \in \mathbb{R}^{n\times n}$ be a symmetric matrix with eigenvalues
		\[
		\lambda_1 \leq \lambda_2 \leq \cdots \leq \lambda_n.
		\]
		Let $B \in \mathbb{R}^{m\times m}$ be an $m\times m$ principal submatrix of $A$, where
		$1 \leq m < n$, and let the eigenvalues of $B$ be
		\[
		\mu_1 \leq \mu_2 \leq \cdots \leq \mu_m.
		\]
		Then, for every $i=1,\ldots,m$,
		$$
		\lambda_i \leq \mu_i \leq \lambda_{i+n-m}.
		$$
		
	\end{lemma}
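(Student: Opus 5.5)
The plan is to derive both inequalities from the Courant--Fischer min-max characterization of eigenvalues of real symmetric matrices. The key observation is that the Rayleigh quotient of $B$ is the Rayleigh quotient of $A$ restricted to a coordinate subspace.

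First, a reduction. Let $I\subseteq\{1,\dots,n\}$, $|I|=m$, be the index set with $B=A[I,I]$. Conjugating $A$ by a permutation matrix $P$ gives $P^TAP$, which has the same eigenvalues as $A$ and has $B$ as its leading $m\times m$ principal block. So we may assume $B$ occupies the first $m$ rows and columns. Let $J:\mathbb{R}^m\to\mathbb{R}^n$ be the embedding $y\mapsto (y,0)$, so that $J^TAJ=B$. Then for every nonzero $y\in\mathbb{R}^m$,
\[
\frac{y^TBy}{y^Ty}=\frac{(Jy)^TA(Jy)}{(Jy)^T(Jy)}.
\]
Also, $J$ maps $k$-dimensional subspaces of $\mathbb{R}^m$ injectively to $k$-dimensional subspaces of $\mathbb{R}^n$.

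Second, the lower bound. Courant--Fischer gives
\[
\mu_i=\min_{\substack{U\le\mathbb{R}^m\\ \dim U=i}}\ \max_{0\ne y\in U}\frac{y^TBy}{y^Ty},
\qquad
\lambda_i=\min_{\substack{W\le\mathbb{R}^n\\ \dim W=i}}\ \max_{0\ne x\in W}\frac{x^TAx}{x^Tx}.
\]
By the identity above, each value in the minimum defining $\mu_i$ equals the value for the subspace $W=J(U)$ in the minimum defining $\lambda_i$. The latter minimum ranges over a larger family of subspaces, so $\lambda_i\le\mu_i$.

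Third, the upper bound. Use the max-min form:
\[
\mu_i=\max_{\substack{U\le\mathbb{R}^m\\ \dim U=m-i+1}}\ \min_{0\ne y\in U}\frac{y^TBy}{y^Ty},
\qquad
\lambda_j=\max_{\substack{W\le\mathbb{R}^n\\ \dim W=n-j+1}}\ \min_{0\ne x\in W}\frac{x^TAx}{x^Tx}.
\]
Take $j=i+n-m$, so that $n-j+1=m-i+1$. The dimensions then match. The family $\{J(U)\}$ is again a subfamily of the admissible $W$, and the maximum over a larger family is larger, so $\mu_i\le\lambda_{i+n-m}$. Alternatively, one can apply the already-proved lower bound to $-A$ and $-B$, whose eigenvalues are the negatives in reversed order: the $i$-th smallest eigenvalue of $-B$ is $-\mu_{m-i+1}$, and that of $-A$ is $-\lambda_{n-i+1}$. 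Reindexing with $i'=m-i+1$ then yields the same inequality.

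The argument is routine. The only step needing care is the index bookkeeping in the upper bound, namely checking that $n-(i+n-m)+1=m-i+1$ so the two max-min characterizations are taken over subspaces of the same dimension. I would verify this by the direct computation above, and cross-check it with the $-A$ trick.
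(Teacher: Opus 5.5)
Your proof is correct: identifying the Rayleigh quotient of $B$ with that of $A$ on the coordinate subspaces $J(U)$, and comparing the Courant--Fischer characterizations over this subfamily, gives $\lambda_i\le\mu_i$ from the min--max form and $\mu_i\le\lambda_{i+n-m}$ from the max--min form. Your dimension check $n-(i+n-m)+1=m-i+1$ and the $-A$ cross-check are both right, and since the paper gives no proof of its own but simply cites Horn and Johnson for this standard fact, your Courant--Fischer argument is essentially the textbook route and nothing needs to be added.
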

	\begin{theorem}\label{final-theorem}
		Let $G$ be a graph on $n \geq 32$ vertices. Suppose $T$ is independent and every degree four vertex has at most one neighbor in $T$. Also suppose $R$ is empty or $R$ is an independent set and for all $x \in T$, $r_x \leq 2$. Then $a(G) \leq 2$.
	\end{theorem}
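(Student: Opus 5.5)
The plan is to bound the second Laplacian eigenvalue through the inertia of $M:=L(G)-2I$. It suffices to show that $M$ has at least two nonpositive eigenvalues, because then $a_2(G)-2\le 0$.

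\emph{Step 1: reduce to a matrix on $T^c$.} Order the vertices as $T$ followed by $T^c$. Since $T$ is independent and every vertex of $T$ has degree $3$, the block $M_{TT}=3I-2I=I$ is nonsingular and positive definite. Let $B$ be the $T\times T^c$ block of $-A(G)$, so $B_{xu}=-1$ exactly when $x\sim u$. Put $K:=M_{T^cT^c}-B^TB$. By Lemma~\ref{Schur_Comp} and Lemma~\ref{Sylvester}, the number of nonpositive eigenvalues of $M$ equals the number of nonpositive eigenvalues of $K$. The entries of $K$ are explicit:
\[
K_{uu}=d(u)-2-d_T(u)=d_{T^c}(u)-2,\qquad K_{uv}=-[u\sim v]-|N_T(u)\cap N_T(v)|\quad(u\neq v).
\]
By Lemma~\ref{Cauchy-interlacing} (using $\lambda_2(K)\le\mu_2$), it then suffices to exhibit a principal submatrix of $K$, preferably $2\times 2$, whose two eigenvalues are both nonpositive. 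For $\begin{pmatrix}a&-c\\-c&b\end{pmatrix}$ this means $a+b\le 0$ and $ab\ge c^2$.

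\emph{Step 2: find the right pair.} A degree-$4$ vertex $u$ has $d_T(u)\le 1$, hence $d_{T^c}(u)\ge 3$ and $K_{uu}\ge 1$. So the useful vertices lie in $S:=\{u\in T^c: d_{T^c}(u)\le 2\}\subseteq H$, where $K_{uu}\in\{-2,-1,0\}$. Any two $u,v\in S$ with $K_{uu},K_{vv}\le -1$ and $|K_{uv}|\le 1$ already work. Any two $u,v\in S$ that are nonadjacent and have no common $T$-neighbour also work, since they give a diagonal block with nonpositive entries. More generally, a pair works whenever $K_{uu}K_{vv}\ge K_{uv}^2$ with both diagonal entries nonpositive.

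To show such a pair exists, I will imitate the counting in the proof of Theorem~\ref{deg5_vertices}. First bound $s=|S|$ from below. Every vertex of $T^c\setminus S$ has $d_{T^c}\ge 3$ and $2e(T^c)=4n-8-6t$, which gives $3s\ge 3t+8-n$. I then need an upper bound on $n$ in terms of $t$, from the degree-deficit computation preceding Lemma~\ref{adjacent-vertex-in-R}, sharpened with the hypotheses. Degree-$4$ vertices absorb at most one $T$-edge each, every $R$-vertex has exactly one $T$-neighbour, and $r_x\le 2$ means each $x\in T$ sends at least one edge into $H$. Together these force many $T$-edges into $H$, hence a large total excess $\sum_{u\in H}(d(u)-4)=t-8$, and so $H$ (and in particular $S$) is large.

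Next, bound the number of "bad" pairs in $S$, namely those that are adjacent or share a $T$-neighbour. Adjacent pairs number at most $e(S)$, which is controlled as in Observation~\ref{edges in S}. Pairs sharing a $T$-neighbour number at most $3t$, and fewer when $r_x\ge 1$, because neighbours in $R$ are not in $S$. If $\binom{s}{2}$ exceeds the number of bad pairs, some pair in $S$ is neither adjacent nor shares a $T$-neighbour, and Step~1 finishes the proof.

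\emph{Main obstacle.} The hard part is Step~2: getting the counting inequality to close for all $n\ge 32$. Unlike in Case (i), degree-$4$ vertices outside $R$ (those with $d_T=0$) may be numerous, so $n$ is not bounded by $2t-8$. If the plain counting fails, I would use the vertices of $S$ with $K_{uu}\le -1$ (those with $d_{T^c}(u)\le 1$), which tolerate one bad relation. Failing that, I would pass to a $3\times 3$ principal submatrix on vertices of $S$, whose two smallest eigenvalues are easier to make nonpositive. As a last resort, I would construct an explicit trial vector, for example one supported on $S\cup N_T(S)$, and apply Theorem~\ref{var_char_2} directly.
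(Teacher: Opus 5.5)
Your Step~1 is correct and is exactly the paper's setup. It uses the Schur complement $K$ of the identity block, with $K_{uu}=d_{T^c}(u)-2$ and $K_{uv}=-a_{uv}-c_{uv}$, where $a_{uv}=[u\sim v]$ and $c_{uv}=|N_T(u)\cap N_T(v)|$. It also uses the criterion that a $2\times 2$ principal block with nonpositive diagonal and $K_{uu}K_{vv}\ge K_{uv}^2$ forces $\lambda_2(K)\le 0$.

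The gap is in Step~2, starting with the claim that the useful vertices lie in $S$. They do not, and without the witnesses you discard, $S$ can be empty. Take $t=16$, and let each $x\in T$ have two neighbours in $R$ and one in $H$, so $|R|=32$. Let $H$ consist of $8$ vertices of degree $5$, each with two $T$-neighbours and three $R$-neighbours (total excess $8=t-8$). Add $18$ degree-$4$ vertices with no $T$-neighbour, all of whose neighbours lie in $R$. All hypotheses hold and $n=74$, but every vertex of $T^c$ has $d_{T^c}\ge 3$. So $S=\emptyset$, and neither your pair search nor any fallback (all of which live on $S$ or $S\cup N_T(S)$) can start.

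Your remark that $r_x\le 2$ gives $e(T,H)\ge t$ is too weak. Since $d_T(u)\le 2(d(u)-4)$ for $u\in H\setminus S$ and the total excess is $t-8$, you need $e(T,H)>2t-16$ just to force $S\neq\emptyset$. The missing idea is the paper's Claim~1:
\begin{itemize}
\item If $r_x=2$ with $N_R(x)=\{u,v\}$, then $K[\{u,v\}]=\begin{pmatrix}1&-1\\-1&1\end{pmatrix}$.
\item Two such $x\neq y$ give a block-diagonal $4\times 4$ principal submatrix with eigenvalues $0,0,2,2$. The cross entries vanish because $R$ is independent and each $R$-vertex has a single $T$-neighbour. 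Hence $\lambda_2(K)\le 0$.
\item So at most one $x$ has $r_x=2$, giving $|R|\le t+1$ and $e(T,H)\ge 2t-1$.
\item The excess count then gives $\sum_{u\in S}(d(u)-4)\le 4l-l_1-2l_2-15$, where $l_i$ is the number of $u\in S$ with $d_{T^c}(u)=i$. Hence $3l_0+2l_1+l_2\ge 15$ and $|S|\ge 5$.
\end{itemize}

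Even with $|S|\ge 5$, the counting you describe does not close. The bound $3s\ge 3t+8-n$ is vacuous because $n$ is unbounded in terms of $t$, as you note yourself. Bounding the pairs with a common $T$-neighbour by $3t$ is hopeless when $|S|$ can be as small as $5$. You have to localize everything to $S$:
\begin{itemize}
\item $\sum_{\{u,v\}\subseteq S}c_{uv}\le\sum_{u\in S}d_T(u)\le 8l-2l_1-4l_2-15$, by the same excess inequality;
\item $\sum_{\{u,v\}\subseteq S}a_{uv}\le\frac12(l_1+2l_2)$.
\end{itemize}
You must also use the weighted thresholds coming from $w_{uv}^2>(2-d_{T^c}(u))(2-d_{T^c}(v))$, where $w_{uv}=a_{uv}+c_{uv}$. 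For example, two vertices with $d_{T^c}=0$ need $w_{uv}\ge 3$. Hunting for a pair with $w_{uv}=0$ instead does not work. When $l=l_0=5$, the available upper bound on $\sum w_{uv}$ is $25\ge\binom{5}{2}$, so unweighted pigeonhole gives nothing. The weighted requirement $3\binom{5}{2}=30>25$ is a contradiction. In general this is the paper's inequality $F(l_0,l_1,l_2)>0$.
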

	\begin{proof}
		Partition the matrix $L(G)-2I$ as follows:
		\begin{align*}
			L(G) - 2I=D(G)-A(G)-2I &=
			\begin{pmatrix}
				3I & 0\\
				0 & D_{T^c}
			\end{pmatrix} - 
			\begin{pmatrix}
				0 & B\\
				B^T & A(G[T^c])
			\end{pmatrix}-\begin{pmatrix}
				2I & 0\\
				0 & 2I
			\end{pmatrix}\\
			&=
			\begin{pmatrix}
				I & -B\\
				-B^T & D_{T^c}-A(G[T^c])-2I
			\end{pmatrix},
		\end{align*}
		where $D_{T^c}$ is the diagonal matrix with diagonal entries as degrees of vertices in $T^c$, $A(G[T^c])$ is the adjacency matrix of the graph induced by $T^c$, and $B$ is the $t \times (n-t) $ submatrix of $A(G)$ with rows indexed by the vertices in $T$ and the column indexed by vertices in $T^c$.
		
		Consider the Schur Complement $M$ of $I$ in $L(G)-2I$. Then $M= D_{T^c}-A(G[T^c])-2I-B^TB$. 
		Define for $u,v \in T^c$, $c_{uv}= \vert N_T(u) \cap N_T(v) \vert$. Clearly,
		\[
		(B^TB)_{u, v} = 
		\begin{cases}
			d_T(u), & u = v,\\c_{uv}, & u \neq v.
		\end{cases}
		\]
		Using this, we get that,\\
		\[
		M_{u, v} = 
		\begin{cases}
			d(u)-2-d_T(u), & u=v, \\
			-a_{uv}-c_{uv}, & u \neq v.
		\end{cases}
		\]
		where $a_{uv}$ denote the $uv$-th entry of $A(G)$. Therefore,\\
		\[
		M_{u, v} = 
		\begin{cases}
			d_{T^c}(u)-2, & u=v, \\
			-a_{uv}-c_{uv}, & u \neq v.
		\end{cases}
		\]
		Suppose by contradiction $a(G) > 2$.
		
		\textbf{Claim 1:} $\vert R \vert \leq t+1$. It is easy to see that the claim is true if $R$ is empty. Suppose $R (\neq \phi) $ forms an independent set. We shall prove that at most one $x \in T$ satisfies $r_x = 2$. Suppose on contrary, $x, y \in T$, $x \neq y$ and satisfies $r_x=r_y=2$. Let $N_R(x) =\{u,v\}$. Then, $d_{T^c}(u)=d_{T^c}(v)= 3$, $a_{uv}=0$, $c_{uv}=1$. Therefore the principal submatrix of $M$ indexed by $N_R(x)$ is:
		$$
		\begin{pmatrix}
			1 & -1\\
			-1 & 1
		\end{pmatrix}
		$$
		whose eigenvalues are $0$ and $2$.\\
		The analogous statement holds for $N_R(y)$. Moreover, since $R$ is independent and each of the vertices in $N_R(x) \cup N_R(y)$ has exactly one neighbor in $T$, we get that the principal submatrix of $M$ indexed by vertices in $N_R(x) \cup N_R(y)$ is:
		$$
		\begin{pmatrix}
			1 & -1 & 0 &0 \\
			-1 & 1 & 0 &0 \\
			0 &0 &1 &-1\\
			0 &0 &-1 &1\\
		\end{pmatrix}
		$$
		with spectrum $\{2,2,0,0\}$. By \ref{Cauchy-interlacing}, $M$ must have at least $2$ nonpositive eigenvalues. By Lemma \ref{Schur_Comp} and Lemma \ref{Sylvester}, $L(G)-2I$ must have at least two nonpositive eigenvalues. On the other hand, since $a(G) > 2$, we have that the matrix $L(G) -2I$ must have exactly one negative eigenvalue and $n-1$ positive eigenvalues. Thus, we get a contradiction. Hence, $T$ cannot have two distinct vertices that are adjacent to two vertices each in $R$. Therefore, there exists at most one vertex in $T$ satisfying $r_x=2$. Thus,
		\begin{equation*}
			\vert R \vert = \sum \limits_{x \in T}r_x\leq t+1 . 
		\end{equation*}

		Recall $H=\{u \in T^c: d(u) \geq 5\}$. Define $U=\{u \in H: d_{T^c}(u)\leq 2\}$. For $i=0,1,2$, define $U_i=\{u \in U: d_{T^c}(u)=i\}$ and $l_i=\vert U_i \vert$. It is easy to see that $U=U_0 \cup U_1 \cup U_2$ and $l:=\vert U \vert = l_0 +l_1 +l_2$.\\
		\textbf{Claim 2:} $H \neq \phi, \quad U \neq \phi$\\
		\textit{Proof:} Suppose $H = \phi$. Then all the vertices of $T^c$ have degree $4$. Since only the vertices in $R$ contribute to edges in $e(T, T^c)$, we get the following: 
		\begin{equation*}
			3t =\vert R \vert \leq t +1.
		\end{equation*}
		where the inequality follows from Claim 1. The inequality is not possible since $t \geq 8$. Thus $H \neq \phi$.\\
		Next suppose $U = \phi$. Then $\forall u \in H, d_{T^c}(u) \geq 3 \text{ and }d(u) \geq 5$. Note that every edge from $T$ to $T^c$ is incident either to a vertex in $R$ or to a vertex in $H$. Therefore we have,\\
		\begin{align*}
			3 t=\sum \limits_{u \in T^c}d_T(u) &=\sum \limits_{u \in R}d_T(u)  + \sum \limits_{u \in H}d_T(u) \\
			&= \vert R \vert + \sum \limits_{u \in H}d_T(u) \\
			&= \vert R \vert + \sum \limits_{u \in H}(d(u)-d_{T^c}(u)) \\
			&\leq \vert R \vert + \sum \limits_{u \in H}d(u)-3\vert H \vert\\
			&\leq t +1 +\sum \limits_{u \in H}(d(u)-4)+\vert H \vert\\
			&\leq t +1 +\sum \limits_{u \in H}(d(u)-4)+\sum \limits_{u \in H}(d(u)-4)\\
			&= t +1 +2\sum \limits_{u \in T^c}(d(u)-4)\\
			&=t +1+2t -16.\quad \quad \mbox{(Using Observation \ref{excess degree})}\\
		\end{align*}
		which is not possible. Thus $U \neq \phi$.
		
		Thus $U_0,U_1,U_2$ can individually be empty but not all at a time. Next we prove something stronger pertaining to the orders of $U_i$.\\
		\textbf{Claim 3:} $3l_0+2l_1+l_2 \geq 15$\\
		\textit{Proof:} As noted  in the proof of $U \neq \phi$, we have that,\\
		\begin{align}\label{CrossEdge_count}
			3t&=\vert R \vert + \sum \limits_{u \in H} d_T(u) \notag\\
			\implies \sum \limits_{u \in H} d_T(u)&=3t-\vert R \vert \notag\\ 
			\implies \sum \limits_{u \in H} d_T(u)&\geq 3t-t-1=2t-1,
		\end{align}\\
		where the last inequality is due to Claim $1$.
		Also we have,\\
		\begin{align}\label{Deg_U}
			\sum \limits_{u \in U} d_T(u) &= \sum \limits_{u \in U} (d(u)-d_{T^c}(u)) \notag \\
			&= \sum \limits_{u \in U} d(u) - \bigg(\sum \limits_{u \in U_1} d_{T^c}(u)+\sum \limits_{u \in U_2} 
			d_{T^c}(u)\bigg)\notag \\
			&= \sum \limits_{u \in U} d(u) - (l_1+2l_2) \notag \\
			&= \sum \limits_{u \in U} (d(u)-4) +4l-l_1-2l_2 .
		\end{align}
		On the other hand, since $\forall u \in H \setminus U$, $d(u) \geq 5$ and $d_{T^c}(u) \geq 3$, we get,\\
		\begin{align}\label{Deg_HminusU}
			\sum \limits_{u \in H\setminus U} d_T(u) &= \sum \limits_{u \in H \setminus U} (d(u)-d_{T^c}(u)) \notag \\
			&\leq \sum \limits_{u \in H \setminus U} d(u)-3\vert H \setminus U \vert \notag \\
			&= \sum \limits_{u \in H \setminus U} (d(u)-4) +4\vert H \setminus U \vert -3\vert H \setminus U \vert \notag \\
			&\leq \sum \limits_{u \in H \setminus U} (d(u)-4) + \sum \limits_{u \in H \setminus U} (d(u)-4) \notag \\
			&= 2 \sum \limits_{u \in H \setminus U} (d(u)-4) \notag \\
			&=2\bigg (\sum \limits_{u \in H } (d(u)-4)- \sum \limits_{u \in U} (d(u)-4)\bigg)  \notag \\
			&=2\bigg (\sum \limits_{u \in T^c } (d(u)-4)- \sum \limits_{u \in U} (d(u)-4)\bigg) \notag \\
			&=2\bigg(t-8-\sum \limits_{u \in  U} (d(u)-4)\bigg), 
		\end{align}
		where the last equality is due to observation \eqref{excess degree}.
		Using equations \eqref{CrossEdge_count}, \eqref{Deg_U} and \eqref{Deg_HminusU}, we obtain,\\
		\begin{align*}
			2t -1 &\leq \sum \limits_{u \in  U} d_T(u) +\sum \limits_{u \in H\setminus U} d_T(u) \\
			&\leq 4l-l_1-2l_2+\sum \limits_{u \in  U} (d(u)-4)+2(t -8 -  \sum \limits_{u \in  U} (d(u)-4))\\
			&=4l-l_1-2l_2+2t -16-\sum \limits_{u \in  U} (d(u)-4).\\ 
		\end{align*}
		Thus we have,\\
		\begin{equation}\label{Excess_Deg_U}
			\sum \limits_{u \in  U} (d(u)-4) \leq 4l-l_1-2l_2-15.
		\end{equation}
		Now since $U \subseteq H$, $\forall u \in U$, $d(u) \geq 5$. Therefore $\sum \limits_{u \in  U} (d(u)-4) \geq  \vert U \vert =l$. Thus, from the above inequality we get,
		\begin{align*}
			l &\leq 4l-l_1-2l_2-15\\
			\implies3l_0+2l_1+l_2&\geq15.
		\end{align*}
		
		Thus we have that $15\leq 3l_0+2l_1+l_2\leq 3(l_0+l_1+l_2)=3l$. Therefore $\vert U \vert \geq 5$. Now for distinct vertices $u,v \in U$, define $w_{uv}=a_{uv}+c_{uv}$. Note that the principal submatrix of $M$ indexed by $u,v$ is of the form:
		$$
		M[\{u,v\}]=\begin{pmatrix}
			d_{T^c}(u)-2 & -w_{uv}\\
			-w_{uv} & d_{T^c}(v)-2
		\end{pmatrix}
		$$
		Since $a(G) >2$, by Lemma \ref{Schur_Comp} and Lemma \ref{Sylvester} $M$ has exactly one negative eigenvalue and all other eigenvalues are positive. By \ref{Cauchy-interlacing}, the largest eigenvalue of $M[\{u,v\}]$ must be positive. Since $d_{T^c}(u)\leq 2, d_{T^c}(v)\leq 2$, both the diagonal entries and consequently the trace of $M[\{u,v\}]$ is nonpositive. This forces the smallest eigenvalue of $M[\{u,v\}]$ to be negative. 
		Therefore, we get,\\
		\[
		\det M[\{u,v\}]<0.
		\]
		
		Thus,
		\begin{equation}\label{Neg_Det}
			w_{uv}^{2}>(2-d_{T^c}(u))(2-d_{T^c}(v)).
		\end{equation}
		
		Since $w_{uv}$ is a nonnegative integer, equation \eqref{Neg_Det} gives the following table:
		
		\begin{equation}\label{Weight_Table}
			\begin{array}{c|ccc}
				& d_{T^c}(v)=0 & d_{T^c}(v)=1 & d_{T^c}(v)=2\\
				\hline
				d_{T^c}(u)=0 & w_{uv}\ge 3 & w_{uv}\ge 2 & w_{uv}\ge 1\\
				d_{T^c}(u)=1 &  w_{uv}\ge 2 & w_{uv}\ge 2 & w_{uv}\ge 1\\
				d_{T^c}(u)=2 & w_{uv}\ge 1  &  w_{uv}\ge 1 & w_{uv}\ge 1
			\end{array}
		\end{equation}
		From the table \eqref{Weight_Table}, we get that,
		\begin{align}\label{WeightSum_Lower}
			\sum \limits_{\{u,v\} \subseteq U}w_{uv}&= \sum \limits_{\{u,v\} \subseteq U_0}w_{uv}+\sum \limits_{u \in U_0, v\in U_1}w_{uv}+\sum \limits_{u \in U_0, v\in U_2}w_{uv}+\sum \limits_{\{u,v\} \subseteq U_1}w_{uv}+\sum \limits_{u \in U_1, v\in U_2}w_{uv}+\sum \limits_{\{u,v\} \subseteq U_2}w_{uv} \notag\\
			&\geq 3\binom{l_0}{2}+2l_0l_1+l_0l_2+2\binom{l_1}{2}+l_1l_2+\binom{l_2}{2}.
		\end{align}
		We also have that,
		\begin{align}\label{Adj_Bound}
			\sum \limits_{\{u,v\} \subseteq U}a_{uv} &=e(U)= \frac{1}{2}\sum \limits_{u \in U}d_U(u) \notag\\
			&\leq \frac{1}{2}\sum \limits_{u \in U}d_{T^c}(u) \notag \\
			&=\frac{1}{2}(l_1+2l_2).
		\end{align}
		Further since $\forall x\in T$, $d(x)=3$, we have that $0 \leq d_U(x) \leq 3$. Therefore,
		\begin{align}\label{CommonNbr_Bound}
			\sum \limits_{\{u,v\} \subseteq U}c_{uv} &=\sum \limits_{x \in T, d_U(x)=0}0  +\sum \limits_{x \in T, d_U(x)=1}0+  \sum \limits_{x \in T, d_U(x)=2}1+\sum \limits_{x \in T, d_U(x)=3}3 \notag \\
			&\leq \sum \limits_{x \in T, d_U(x)=0}d_U(x)  +\sum \limits_{x \in T, d_U(x)=1}d_U(x)+  \sum \limits_{x \in T, d_U(x)=2}d_U(x)+\sum \limits_{x \in T, d_U(x)=3}d_U(x) \notag\\
			&=\sum \limits_{x \in T}d_U(x) \notag \\
			&=\sum \limits_{u \in U}d_{T}(u) \notag \\
			&= \sum \limits_{u \in U} (d(u)-4) +4l-l_1-2l_2 \quad \quad \mbox{Using \eqref{Deg_U}}\notag \\
			&\leq 4l-l_1-2l_2-15+4l-l_1-2l_2  \quad \quad  \mbox{Using \eqref{Excess_Deg_U}} \notag \\
			&=8l-2l_1-4l_2-15.
		\end{align}
		Using \eqref{Adj_Bound} and \eqref{CommonNbr_Bound} we get that,\\
		\begin{align}\label{WeightSum_Uppr}
			\sum \limits_{\{u,v\} \subseteq U}w_{uv}&=\sum \limits_{\{u,v\} \subseteq U}(a_{uv}+c_{uv}) \notag \\
			&\leq \frac{1}{2}(l_1+2l_2) +8l-2l_1-4l_2-15 \notag \\
			&= 8l -\frac{3}
			{2}l_1-3l_2-15.
		\end{align}
		Thus using \eqref{WeightSum_Lower} and \eqref{WeightSum_Uppr} we get ,\\
		\begin{align*}
			3\binom{l_0}{2}+2l_0l_1+l_0l_2+2\binom{l_1}{2}+l_1l_2+\binom{l_2}{2}&\leq 8l -\frac{3}
			{2}l_1-3l_2-15\\
			3l_0(l_0-1)+4l_0l_1+2l_0l_2+2l_1(l_1-1)+2l_1l_2+l_2(l_2-1)&\leq 16(l_0+l_1+l_2)-3l_1-6l_2-30\\
			3l_0^2+4l_0l_1+2l_0l_2+2l_1^2+2l_1l_2+l_2^2-3l_0-2l_1-l_2&\leq 16l_0+13l_1+10l_2-30.
		\end{align*}
		Define 
		\begin{equation}\label{Contra}
			F(l_0,l_1,l_2):=3l_0^2+4l_0l_1+2l_0l_2+2l_1^2+2l_1l_2+l_2^2-19l_0-15l_1-11l_2+30 \leq 0.
		\end{equation}
		Set $\sigma=3l_0+2l_1+l_2$. By Claim $3$, we have $\sigma \geq 15$. A direct calculation shows that\\
		\begin{equation*}
			3F(l_0,l_1,l_2)=\sigma^2-19\sigma+90+2l_1^2+2l_1l_2+2l_2^2-7l_1-14l_2.
		\end{equation*}
		Since $\sigma \geq 15$, we get that $\sigma^2-19\sigma+90=(\sigma-10)(\sigma-9) \geq 30$.\\
		Further, we have, 
		\begin{align*}
			2l_1^2+2l_1l_2+2l_2^2-7l_1-14l_2&=2(l_1^2+l_1l_2-\frac{7}{2}l_1)+2(l_2^2-7l_2)\\
			&=2\left(l_1^2+l_1(l_2-\frac{7}{2})\right)+2\left(l_2-\frac{7}{2}\right)^2-\frac{49}{2}\\
			&=2\left(l_1+\frac{l_2-\frac72}{2}\right)^2
			-\frac12\left(l_2-\frac72\right)^2+2\left(l_2-\frac{7}{2}\right)^2
			-\frac{49}{2}\\
			&=2\left(l_1+\frac{l_2-\frac72}{2}\right)^2
			+\frac32\left(l_2-\frac72\right)^2
			-\frac{49}{2} \notag\\
			&\ge -\frac{49}{2}.
		\end{align*}
		Thus, we get,\\
		\[
		3F(l_0,l_1,l_2)\ge 30-\frac{49}{2}
		=\frac{11}{2}>0.
		\]
		That is, $F(l_0,l_1,l_2) > 0$. This contradicts the established inequality \eqref{Contra}. \\
		Therefore, we must have $a(G) \leq 2$.
	\end{proof}

	\section{Final remarks}\label{final-rem}
	
	In this paper, we proved that among all graphs with $n$ vertices and $2n-4$ edges, the complete bipartite graph $K_{2,n-2}$ has the largest algebraic connectivity. This settles a conjecture of Kolokolnikov \cite{Kolo15} from 2015. Note that the complete bipartite graph $K_{2,n-2}$ is not the unique maximizer in general. If $Q_3$ is the $3$-dimensional binary cube graph, then $Q_3$ has $8$ vertices, $12=2(8-2)$ edges and $a(Q_3)=a(K_{2,6})=2$. If $M_8$ is the  M\"obius ladder obtained from $C_8$ by adding edges between opposite pair of vertices, then $M_8$ has $8$ vertices, $12=2(8-2)$ edges and $a(M_8)=a(K_{2,6})=2$. If $P^{+}$ is the Petersen graph with one extra edge, then $P^{+}$ has $10$ vertices, $16=2(10-2)$ edges and $a(P^{+})=2$. 
	
	As already observed in \cite{Kolo15}, the complete bipartite graph $K_{b,n-b}$ does not have the largest algebraic connectivity among all graphs with $n$ vertices and $b(n-b)$ edges for $b\geq 8$. 
	
	We provide an example in the Appendix to show that, in general, $K_{3,n-3}$ does not maximize the algebraic connectivity among the graphs on $n$ vertices and $3(n-3)$ edges.
	\appendix
	
	\section*{Acknowledgments}
	
	This work was initiated during the visit of Sebastian M.~Cioab\u{a} to IIT Hyderabad in October 2025 for delivering a series of lectures under the GIAN program. The authors thank IIT Hyderabad and GIAN for providing this opportunity. Abhay Jayarajan acknowledges support from the DST INSPIRE Fellowship (IF220692). M.~Rajesh Kannan acknowledges financial support from ANRF-CRG, India (File No.\ CRG/2023/002747). Rahul Roy thanks the University Grants Commission (UGC), India, for financial support (NTA Ref. No. 231610209574).
	
	The authors acknowledge the use of ChatGPT for improving the exposition of the manuscript, refining some of the proofs and constructing the example. All mathematical statements have been independently verified by the authors, who are solely responsible for the correctness of the results presented in this paper.

	\bibliographystyle{amsplain}
	\bibliography{alg-conn-fixed-edge}

		
		
		

		
		
		
		

	
	\section{Appendix}
	\begin{lemma}\label{cong}\cite{MR107647}
		Let $A$ be a real symmetric matrix. Let $\Delta_k$ be the determinant of the $k$-th leading principal submatrix. Set $\Delta_0=1$. If for $i=1,\dots,n$, $\Delta_i \neq 0$ then $A$ is congruent to the diagonal matrix $diag \bigg (\dfrac{\Delta_1}{\Delta_0}, \dfrac{\Delta_2}{\Delta_1}, \dots , \dfrac{\Delta_n}{\Delta_{n-1}}\bigg )$
	\end{lemma}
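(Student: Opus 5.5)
The plan is to induct on $n$, peeling off the last row and column with a Schur complement, exactly in the spirit of Lemma~\ref{Schur_Comp}. For $n=1$ the matrix is $A=(\Delta_1)=\bigl(\Delta_1/\Delta_0\bigr)$, so there is nothing to prove.

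For the inductive step, let $n\ge 2$ and write $A_k$ for the $k$-th leading principal submatrix. Partition
\[
A=\begin{pmatrix} A_{n-1} & b\\ b^T & c\end{pmatrix},
\]
with $b\in\mathbb{R}^{n-1}$ and $c\in\mathbb{R}$. Since $\Delta_{n-1}=\det A_{n-1}\neq 0$, the block $A_{n-1}$ is nonsingular. Its Schur complement is the scalar
\[
s=c-b^TA_{n-1}^{-1}b .
\]
By Lemma~\ref{Schur_Comp}, $A$ is congruent to $\mathrm{diag}(A_{n-1},s)$. Concretely, with $P=\begin{pmatrix} I & -A_{n-1}^{-1}b\\ 0 & 1\end{pmatrix}$ one has $P^TAP=\mathrm{diag}(A_{n-1},s)$. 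Taking determinants and using $\det P=1$ gives $\Delta_n=\Delta_{n-1}\,s$, hence
\[
s=\frac{\Delta_n}{\Delta_{n-1}} .
\]

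Next apply the induction hypothesis to $A_{n-1}$. It is real symmetric, and its leading principal minors are precisely $\Delta_1,\dots,\Delta_{n-1}$, all nonzero. So there is an invertible $Q$ with
\[
Q^TA_{n-1}Q=\mathrm{diag}\bigl(\Delta_1/\Delta_0,\dots,\Delta_{n-1}/\Delta_{n-2}\bigr).
\]
Then $\widetilde Q=\mathrm{diag}(Q,1)$ is invertible. Conjugating $\mathrm{diag}(A_{n-1},s)$ by $\widetilde Q$ yields the full diagonal matrix $\mathrm{diag}(\Delta_1/\Delta_0,\dots,\Delta_n/\Delta_{n-1})$. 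Finally, $A$ is congruent to this matrix via $P\widetilde Q$, since a composition of congruences is a congruence.

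There is no real obstacle here. The only points to state carefully are that the leading minors of $A_{n-1}$ coincide with the first $n-1$ leading minors of $A$, which is what makes the induction go through, and the determinant identity $\Delta_n=\Delta_{n-1}s$. The latter also follows directly from the block formula $\det A=\det A_{n-1}\cdot\bigl(c-b^TA_{n-1}^{-1}b\bigr)$.
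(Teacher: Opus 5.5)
The paper gives no proof of this lemma; it imports it from the cited reference (it is Jacobi's classical theorem). Your induction is correct and complete. The block computation $P^TAP=\mathrm{diag}(A_{n-1},s)$ checks out, using the symmetry of $A_{n-1}^{-1}$. Since $\det P=1$, you get $s=\Delta_n/\Delta_{n-1}$. The leading minors of $A_{n-1}$ are exactly $\Delta_1,\dots,\Delta_{n-1}$, so the inductive hypothesis applies, and $P\widetilde Q$ is the required congruence.

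Unrolled, your argument is the $LDL^T$ factorization given by Gaussian elimination without pivoting, which is the usual proof of this result. Reusing Lemma~\ref{Schur_Comp} has the advantage of making the appendix self-contained and keeping it in the same Schur-complement spirit as the rest of the paper.

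Two minor remarks. First, the step with $\widetilde Q$ is a congruence $\widetilde Q^{T}(\cdot)\,\widetilde Q$, not a conjugation. Second, your argument never uses $\Delta_n\neq 0$, so it proves the congruence whenever $\Delta_1,\dots,\Delta_{n-1}$ are nonzero. The extra hypothesis only matters in the paper's application, where $\Delta_{10}\neq 0$ rules out a zero eigenvalue of $M_1$.
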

	Define a graph $G$ with vertex set $V(G)= \{1, \dots, 14\}$ and the adjacency relation as per Table \ref{tab:adjacency}.
	\begin{table}[ht]
		\centering
		\begin{tabular}{c|l}
			\hline
			Vertex & Neighbors \\
			\hline
			$1$  & $5,9,10,11$ \\
			$2$  & $6,9,11,12$ \\
			$3$  & $9,12,13,14$ \\
			$4$  & $8,9,10,13$ \\
			$5$  & $1,8,12,13,14$ \\
			$6$  & $2,7,8,10,14$ \\
			$7$  & $6,9,10,13,14$ \\
			$8$  & $4,5,6,11,12$ \\
			$9$  & $1,2,3,4,7$ \\
			$10$ & $1,4,6,7,12$ \\
			$11$ & $1,2,8,13,14$ \\
			$12$ & $2,3,5,8,10$ \\
			$13$ & $3,4,5,7,11$ \\
			$14$ & $3,5,6,7,11$ \\
			\hline
		\end{tabular}
		\caption{Vertex-neighbor representation of the graph.}
		\label{tab:adjacency}
	\end{table}
	This graph has $14$ vertices and $3(14-3)=33$ edges. We prove that $a(G) > 3$.
	We give a proof using the Schur-complement method used in the proof of $b=2$ case.
	
	Set $T_1=\{v \in V(G) : d(v) =4\}=\{1,2,3,4\}$ and $\vert T_1 \vert= t_1$.
	
	Partition the matrix $L(G)-3I$ as follows:
	\begin{align*}
		L(G) - 3I=D(G)-A(G)-3I &=
		\begin{pmatrix}
			4I & 0\\
			0 & D_{T_1^c}
		\end{pmatrix} - 
		\begin{pmatrix}
			0 & B_1\\
			B_1^T & A(G[T_1^c])
		\end{pmatrix}-\begin{pmatrix}
			3I & 0\\
			0 & 3I
		\end{pmatrix}\\
		&=
		\begin{pmatrix}
			I & -B_1\\
			-B_1^T & D_{T_1^c}-A(G[T_1^c])-3I
		\end{pmatrix},
	\end{align*}
	where $D_{T_1^c}$ is the diagonal matrix with diagonal entries as degrees of vertices in $T_1^c$, $A(G[T_1^c])$ is the adjacency matrix of the graph induced by $T_1^c$, and $B_1$ is the $t_1 \times (n-t_1) $ submatrix of $A(G)$ with rows indexed by the vertices in $T_1$ and the column indexed by vertices in $T_1^c$. Since all the vertices in $T_1^c$ have degree $5$, we have that $D_{T_1^c}=5I$.
	
	Consider the Schur Complement $M_1$ of $I$ in $L(G)-3I$. Then $M_1= D_{T_1^c}-A(G[T_1^c])-3I-B_1^TB_1=5I-A(G[T_1^c])-3I-B_1^TB_1=2I-A(G[T_1^c])-B_1^TB_1$. 
	Define for $u,v \in T_1^c$, $c'_{uv}= \vert N_{T_1}(u) \cap N_{T_1}(v) \vert$. Clearly,
	$$
	(B_1^TB_1)_{u, v} = 
	\begin{cases}
		d_{T_1}(u), & u = v,\\c'_{uv}, & u \neq v.
	\end{cases}
	$$
	Using this, we get that,\\
	$$
	(M_1)_{u, v} = 
	\begin{cases}
		2-d_{T_1}(u), & u=v, \\
		-a_{uv}-c'_{uv}, & u \neq v.
	\end{cases}
	$$
	where $a_{uv}$ denote the $uv$-th entry of $A(G)$. Therefore,\\
	$$
	(M_1)_{u, v} = 
	\begin{cases}
		2-d_{T_1}(u), & u=v, \\
		-a_{uv}-c'_{uv}, & u \neq v.
	\end{cases}
	$$
	
	We can compute each of the entries of $M_1$ using above equation,
	
	$$
	M_1=
	\begin{pmatrix}
		1 & 0 & 0 & -1 & -1 & -1 & -1 & -1 & -1 & -1\\
		0 & 1 & -1 & -1 & -1 & -1 & -1 & -1 & 0 & -1\\
		0 & -1 & 2 & 0 & -1 & -1 & 0 & 0 & -1 & -1\\
		-1 & -1 & 0 & 1 & -1 & -1 & -1 & -1 & -1 & 0\\
		-1 & -1 & -1 & -1 & -2 & -2 & -2 & -2 & -2 & -1\\
		-1 & -1 & -1 & -1 & -2 & 0 & -1 & -1 & -1 & 0\\
		-1 & -1 & 0 & -1 & -2 & -1 & 0 & -1 & -1 & -1\\
		-1 & -1 & 0 & -1 & -2 & -1 & -1 & 0 & -1 & -1\\
		-1 & 0 & -1 & -1 & -2 & -1 & -1 & -1 & 0 & -1\\
		-1 & -1 & -1 & 0 & -1 & 0 & -1 & -1 & -1 & 1
	\end{pmatrix}.
	$$
	
	For $k \in \{1, \dots, 10\}$, let $\Delta_k$ denote the determinant of the leading $k \times k$ principal submatrix of $M_1$ and set $\Delta_0=1$. One can check that the following holds,
	$$(\Delta_1, \dots, \Delta_{10})= (1,1,1,-2,-9,-18,-17,-16,-7,-3)$$
	
	Using Lemma \ref{cong}
	we get that $M_1$ has $9$ positive and $1$ negative eigenvalue and consequently $L(G)-3I$ has $13$ positive and $1$ negative eigenvalue. Therefore, $a(G) > 3$.

	\definecolor{vertexred}{RGB}{255,0,0}
	\definecolor{vertexblue}{RGB}{173,216,230}
	\begin{figure}[H]
		\centering
		\begin{tikzpicture}[
			edge/.style={draw=black, line width=1.1pt},
			vertex/.style={
				circle,
				draw=black,
				line width=0.8pt,
				minimum size=8mm,
				inner sep=0pt,
				font=\small
			},
			red vertex/.style={vertex, fill=vertexred},
			blue vertex/.style={vertex, fill=vertexblue}
			]
			
			
			\foreach \i in {1,...,14}{
				\pgfmathsetmacro{\vertexangle}{90+360*(\i-1)/14}
				\coordinate (p\i) at (\vertexangle:5.6cm);
			}
			
			\foreach \u/\v in {1/5,1/9,1/10,1/11,
				2/6,2/9,2/11,2/12,
				3/9,3/12,3/13,3/14,
				4/8,4/9,4/10,4/13,
				5/8,5/12,5/13,5/14,
				6/7,6/8,6/10,6/14,
				7/9,7/10,7/13,7/14,
				8/11,8/12,10/12,
				11/13,11/14}{
				\draw[edge] (p\u) -- (p\v);
			}
			
			\foreach \i in {1,...,4}{
				\node[red vertex] (v\i) at (p\i) {\i};
			}
			
			\foreach \i in {5,...,14}{
				\node[blue vertex] (v\i) at (p\i) {\i};
			}
			
		\end{tikzpicture}
		\caption{The vertices in $T_1$ are colored red}
	\end{figure}
	
	\affl{Sebastian M.~Cioab\u{a}}{cioaba@udel.edu}{ Department of Mathematical Sciences, University of Delaware, Newark, DE 19716-2553,
		USA.}

	\affl{Abhay Jayarajan}{ma23resch02001@iith.ac.in, abhayjayarajan@gmail.com}{ Department of Mathematics, Indian Institute of Technology Hyderabad, Kandi, Sangareddy 502284, India.}
	
	\affl{M. Rajesh Kannan}{rajeshkannan@math.iith.ac.in, rajeshkannan1.m@gmail.com}{Department of Mathematics, Indian Institute of Technology Hyderabad, Kandi, Sangareddy 502284, India.}
	
	\affl{Rahul Roy}{ma23resch11004@iith.ac.in, rahulroy13832@gmail.com}{ Department of Mathematics, Indian Institute of Technology Hyderabad, Kandi, Sangareddy 502284, India.}

\end{document}